\newtheorem{lemma}{Lemma}[section]
\newtheorem{proposition}{Proposition}[section]
\newtheorem{theorem}{Theorem}[section]
\newtheorem{definition}{Definition}[section]
\newcommand{\ZZ}{\mathbb Z}
\newcommand{\NN}{\mathbb N}
\newcommand{\pK}{\mathfrak p}
\newcommand{\bL}{\mathfrak B}
\newcommand{\QQ}{\mathbb Q}
\newcommand{\OK}{\mathcal O_K}
\newcommand{\ft}{f^{\tau}}
\newcommand{\Lt}{L^{\tau}}
\newcommand{\SiK}{\mathcal O_{K,S}}
\newcommand{\SuK}{\mathcal O^*_{K,S}}
\newcommand{\SuL}{\mathcal O^*_{L,S_{L}}}
\newcommand{\SuLKo}{\mathcal O^*_{L,K,S_L,1}}
\newcommand{\SuLKpmo}{\mathcal O^*_{L,K,S_L,\pm 1}}
\newcommand{\pgl}{\text{PGL}_2(\ZZ)}
\newcommand{\Gl}{G_\lambda}
\newcommand{\Gm}{G_\mu}
\newcommand{\Sl}{S_\lambda}
\newcommand{\Sm}{S_\mu}
\newcommand{\Slinf}{S_\lambda^\infty}
\newcommand{\Sminf}{S_\mu^\infty}
\newcommand{\cinf}{c_{1,\infty}}
\newcommand{\lI}{\lambda_I}
\newcommand{\linf}{\lambda_\infty}
\newcommand{\minf}{\mu_\infty}
\newcommand{\ord}{\operatorname{ord}}
\newcommand{\Gal}{\operatorname{Gal}}
\newcommand{\rank}{\operatorname{rank}}
\newcommand{\Norm}{\operatorname{Norm}}
\begin{document}

\title{Computing all elliptic curves over an arbitrary number field with prescribed primes of bad 
reduction}
		
\author{Angelos Koutsianas}
\date{}
\maketitle

\begin{abstract}
In this paper we study the problem of how to determine all elliptic curves defined over an arbitrary 
number field $K$ with good reduction outside a given finite set of primes $S$ of $K$ by solving 
$S$--unit  equations. We give examples of elliptic curves over $\QQ$ and quadratic fields.
\end{abstract}

\section{Introduction}\label{Introduction}

Let $K$ be a number field and $S$ a finite set of primes(non--archimedean) of $K$. By a classical 
result of Shafarevich (see \cite{SilvermanArithmetic}) we know that there are finitely many 
isomorphism classes of elliptic curves defined over $K$ with good reduction at all primes outside 
$S$. Many people have previously discovered methods to find explicit representatives of each 
isomorphism class(\cite{CremonaLingham07}, \cite{Kida01-2}, \cite{Kida01-1}). It is not hard to show 
(see \cite{CremonaLingham07}) that it is enough to determine the $j$--invariants of the isomorphic 
classes and from them we can easily determine the elliptic curves. In the Cremona--Lingham 
method, given in \cite{CremonaLingham07}, the $j$--invariants are found by computing $S$--integral 
points on specific elliptic curves. However, the available method of finding $S$--integral on an 
elliptic curve requires the Mordell--Weil group of the curve and that problem may be really hard.

The new idea\footnote{Suggested to John Cremona in a personal communication with Noam Elkies in June 
2010.}, which we display here, is to find the possible $j$--invariant indirectly using the 
Legendre $\lambda$--invariant. They are related by,
\begin{equation}\label{j_lambda}
j=2^8\frac{(\lambda^2-\lambda+1)^3}{\lambda^2(1-\lambda)^2}
\end{equation}
and we know that $\lambda$ lies in the 2--division field $L$ of the associated elliptic curve. Such a 
field $L$ is a Galois extension of $K$ with $\Gal(L/K)$ isomorphic to a subgroup of $S_3$ and 
unramified outside $S^{(2)}=S\cup\lbrace\pK\subset\OK :\pK\mid 2\rbrace$. We will first prove that 
there are only finitely many extensions $L$ with these two properties and will give an algorithm for 
determining all of them. If $S_L$ is the set of all primes in $L$ which are above the primes of 
$S^{(2)}$, then we know that $\lambda$ is in $\SuL$, the group of the $S_L$--units of $L$. In 
addition, $\mu=1-\lambda$ is related to the same $j$ and lies in $\SuL$ hence as well. So, we have 
that a possible $\lambda$ with respect to $j$ satisfies the equation: 
\begin{equation}\label{SunitEquation}
\lambda+\mu=1
\end{equation} 
where $\lambda,\mu\in\SuL$. Solving the above equation for each $L/K$ we find all $\lambda$. Then we 
determine the $j$--invariants of the elliptic curves we are seeking and from these it is easy to find 
the elliptic curves(see \cite[$\S 3$]{CremonaLingham07}).

The above equation (\ref{SunitEquation}) is a special case of the general family of Diophantine 
equations which are called $S$--unit equations\cite{Smartbook}. By the theory of linear forms in 
logarithms and Siegel's theorem (\cite{SilvermanArithmetic}) we know that such a Diophantine equation 
has a finite set of solutions. During the 1980's and 1990's many methods of solving $S$--unit 
equations algorithmically were developed. De Weger in his thesis and an article 
(\cite{DeWegerthesis}, \cite{DeWeger87}) gave an algorithmic solution for the special case when 
$K=\QQ$ using lattice basis reduction algorithms. Many others used and extended De Weger's idea to 
solve $S$--unit equations over an arbitrary number field (\cite{TzanakisDeWeger89}, \cite{Smart95}, 
\cite{TzanakisDeWeger92}, \cite{Smartbook}). However, when the degree of the number field or the set 
of primes $S$ is large then the final sieve step of the method may not be efficient. For this reason, 
a new idea was introduced by Wildanger and was extended by Smart(\cite{Wildanger00}, \cite{Smart99}) 
which improves the practicality of solving the $S$--unit equation (\ref{SunitEquation}). 

Using the above algorithmic methods, how to solve the $S$--unit equation (\ref{SunitEquation}), 
unable us to find all $\lambda$. Thus, the new algorithm we suggest has three main parts which are 
the following, given $K$ and $S$ as input:

\begin{enumerate}
\item[$\bullet$] Find the finite set of all possible 2--division fields $L/K$ where $\lambda$ may 
	lie.
\item[$\bullet$] For each extension $L/K$ solve the related $S$--unit equation $\lambda+\mu=1$ only 
	for $\lambda$ and $\mu$ such that the associated $j\in K$.
\item[$\bullet$] For each $\lambda$ we evaluate $j$ and then we find the elliptic curves.
\end{enumerate}
Even though our algorithm is based on the above general algorithmic methods for solving $S$--unit 
equations in $L$, we will be able to derive additional conditions on $\lambda$. The main such 
additional condition is simply that we may also assume that the value of $j$ (obtained from $\lambda$ 
via (\ref{j_lambda})) lies in $K$; this restricts $\lambda$ to lie in a subgroup of $\SuL$ of 
substantially smaller rank.

This paper has five sections. In the first we fix some of the notation and describe the basic 
properties of the 2--division field. In the second section we describe how we can compute the set of 
all possible 2--division fields $L/K$ using Kummer theory and we prove that there are finitely many 
of these. In the third section we give necessary and sufficient conditions of $\lambda$ to be at the 
same time a solution of the $S$--unit equation (\ref{SunitEquation}) and the $\lambda$--invariant of 
an elliptic curve defined over $K$. These conditions make the algorithmic methods of solving equation 
(\ref{SunitEquation}) more effective by allowing us to replace the finitely generated group $\SuL$ 
with one subgroup of smaller rank. We also explain how we can reduce the number of $S$--unit 
equations we have to solve. In the fourth section we show how we can modify Smart and Windanger's 
ideas in order to speed up the sieve step in the solution of (\ref{SunitEquation}). In the final 
section we give examples of elliptic curves in the case when $K=\QQ$ or a quadratic field, and we 
compare our method with current implementations.

Unfortunately, as far as we know there is no implementation in any known software package(Sage, 
Magma etc) that solves general $S$--unit equations. Since our new method is based on the algorithmic 
solution of $S$--unit equations we had to implement such an algorithm. The importance of an effective 
implementation of solving general $S$--unit equations is not limited to our problem, but may be 
applied to many other problems in Number Theory and Diophantine equations. We stress that it is not 
trivial at all to implement such an algorithm. 


Acknowledgements: I would to thank my supervisor professor John Cremona for suggesting me this 
problem, for the long discussions we had and his constant encouragement. Also, for the suggestion of 
Proposition (\ref{PropHilbertSymbol}) and his comments on an earlier draft of this article. Moreover, 
I want to thank ICERM and Brown University because a big part of the paper was written during my 
visit in the institute. Finally, I would like to thank Bekiari-Vekri foundation and the Academy of 
Athens for funding a part of my studies and EPSRC for covering a part of my fees.

\section{Notation--Basic definitions}

Let $K$ be a number field and $S$ a finite set of primes of $K$. Let $E$ be an elliptic curve over 
$K$ in the long Weierstrass form 
$$E:y^2+a_1xy+a_3=x^3+a_2x^2+a_4x+a_6$$
which has good reduction outside the set $S$. Let $j$ be the $j$--invariant of the curve and $\Delta$ 
its discriminant.

We define the ring of $S$--integers $\SiK$ of $K$, the $S$--unit group $\SuK$ of $K^*$ and the $n$--Selmer group of $K$ and $S$ to be
\begin{align*}
\SiK&=\lbrace x\in K^*\mid\ord_\pK(x)\geq0,\forall\pK\not\in S\rbrace\\
\SuK&=\lbrace x\in K^*\mid\ord_\pK(x)=0,\forall\pK\not\in S\rbrace.\\
K(S,n)&=\left\lbrace x\in K^*/K^{*n}\mid\ord_\pK (x)\equiv 0\mod n,\forall\pK\not\in S 
\right\rbrace
\end{align*}
It is known that $\SuK$ is a finitely generated abelian group and $K(S,n)$ a finite abelian
group(see \cite{CohenAdvanced}). For $n\mid m$ we define $K(S,n)_m$ to be the image of the natural 
map $K(S,m)\rightarrow K(S,n)$. For a positive natural number $n$ we also define
$$S^{(n)}=S\cup\left\lbrace\pK\mid\ord_\pK(n)>0\right\rbrace.$$

In order to determine all elliptic curves $E/K$ with good reduction outside $S$ it is enough to 
determine the $j$--invariants of the isomorphic classes of the curves(\cite{CremonaLingham07}). 
The cases $j=0$ and $j=1728$ were treated fully in \cite{CremonaLingham07} and so we omit these from consideration throughout this paper. For other $j$ we have the following,

\begin{proposition}[\cite{CremonaLingham07}]\label{PropCremonaLingham}
Let $E$ be an elliptic curve defined over $K$ with good reduction at all primes $\pK\not\in S$ and 
$j\neq 0, 1728$. Set $w:=j^2(j-1728)^3$. Then 
\begin{align*}
\Delta&\in K(S,12),&j&\in\SiK,&w&\in K(S,6)_{12}.
\end{align*}
Conversely, if $j\in\SiK$ with $w\in K(S,6)_{12}$ then there exists elliptic curve $E$ with $j(E)=j$ 
and good reduction outside $S^{(6)}$.
\end{proposition}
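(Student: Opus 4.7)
The strategy rests on the classical Weierstrass invariants $c_4,c_6,\Delta$ satisfying
\[
c_4^3-c_6^2=1728\Delta,\qquad j=c_4^3/\Delta,
\]
together with the transformation rules $c_4\mapsto u^{-4}c_4$, $c_6\mapsto u^{-6}c_6$, $\Delta\mapsto u^{-12}\Delta$ under an admissible change of variable $(x,y)\mapsto(u^2x+r,\,u^3y+su^2x+t)$. Consequently the class of $\Delta$ in $K^*/K^{*12}$ and of $w=j^2(j-1728)^3$ in $K^*/K^{*6}$ depend only on the $K$-isomorphism class of $E$.

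For the forward implication, good reduction at each $\pK\not\in S$ yields a local minimal model with $\ord_\pK(\Delta)=0$, and our global model differs from it by an admissible substitution over $K_\pK$, so the two discriminants differ by a $12$th power in $K_\pK^*$. Hence $\ord_\pK(\Delta)\equiv 0\pmod{12}$ at every $\pK\not\in S$, giving $\Delta\in K(S,12)$. The same local model has $\ord_\pK(c_4),\ord_\pK(c_6)\geq 0$, and since $j=c_4^3/\Delta$ is isomorphism-invariant we conclude $j\in\SiK$. Using the identity $j-1728=c_6^2/\Delta$, a short calculation gives $w=c_4^6c_6^6/\Delta^5$, whence modulo $K^{*6}$ we have $w\equiv\Delta^{-5}\equiv\Delta$; since $\Delta\in K(S,12)$, its image in $K(S,6)$ lands in $K(S,6)_{12}$, and that image coincides with the class of $w$.

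For the converse, given $j\in\SiK$ and $w\in K(S,6)_{12}$, unwinding the definition yields $v\in K^*$ with $\ord_\pK(wv^6)\equiv 0\pmod{12}$ for every $\pK\not\in S$. Setting $\Delta_0:=wv^6$, $c_4:=j(j-1728)v^2$, $c_6:=j(j-1728)^2v^3$, a direct computation confirms the required identities $c_4^3-c_6^2=1728\Delta_0$ and $c_4^3/\Delta_0=j$. Passing from this $(c_4,c_6)$-data to an honest Weierstrass equation over $K$ requires only inverting $2$ and $3$, so it is integral away from $S^{(6)}$; at each $\pK\not\in S^{(6)}$, absorbing the remaining $12$th-power contribution in $\ord_\pK(\Delta_0)$ by a further admissible substitution produces a local model with $\ord_\pK(\Delta)=0$, proving good reduction outside $S^{(6)}$.

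The delicate point is the converse. Both the conversion from $(c_4,c_6)$ to an integral Weierstrass equation and the $12$th-power cleanup of $\Delta$ can fail at primes above $6$, which is precisely why the set of allowed bad primes must be enlarged from $S$ to $S^{(6)}$. A secondary subtlety is that the cleanup is performed place-by-place rather than by a single global change of variable; this is legitimate because good reduction is a local condition and the $K$-isomorphism class of the curve is already pinned down by the $(c_4,c_6)$-data.
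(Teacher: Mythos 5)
Your argument is correct and is essentially the reconstruction of the Cremona--Lingham proof that the paper itself defers to: the forward direction via minimality of local models and the congruence $w\equiv\Delta\bmod K^{*6}$, and the converse via the explicit $(c_4,c_6)=(j(j-1728)v^2,\,j(j-1728)^2v^3)$ data, which (up to the harmless factor $v=12u$) is exactly the curve $y^2=x^3-3u^2j(j-1728)x-2u^3j(j-1728)^2$ recorded after the proposition. The only point left tacit is that after normalizing $\ord_\pK(\Delta_0)$ to zero at $\pK\notin S^{(6)}$ the integrality of $c_4,c_6$ follows from $\ord_\pK(j),\ord_\pK(j-1728)\geq 0$, but this is immediate.
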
 

An explicit equation of the curve $E$ whose existence is claimed in the above proposition is the 
following,
$$E:y^2=x^3-3u^2j(j-1728)x-2u^3j(j-1728)^2$$
with $u\in K^*$ is such that $(3u)^6w\in K(S,12)$. It is shown in \cite{CremonaLingham07} that this 
curve has good reduction outside $S^{(6)}$. The other curves with good reduction outside $S$ and the 
same $j$--invariant are all twists $E^{(u)}$ of $E$ by $u\in K(S,2)$. 

Recall that the 2--division polynomial of $E$ is defined by,
$$f_2 = 4x^3+b_2x^2+2b_4x+b_6$$
where $b_2=a_1^2+4a_2$, $b_4=2a_4+a_1a_3$ and $b_6=a_3^2+4a_6$. Denote the roots of $f_2$ by 
$e_1,e_2,e_3$ then the 2--division field of $E$ is $L=K(e_1,e_2,e_3)$.

\begin{definition}
The $\lambda$--invariant of $E$ is,
$$\lambda=\frac{e_1-e_3}{e_1-e_2}.$$
\end{definition}
As it stands $\lambda$ is not well--defined since the numbering of the $e_i$ is not fixed. We use 
$\lambda$ to denote any of the six values of the set,
$$\Lambda:=\left\lbrace\lambda,1-\lambda,\frac{1}{\lambda},\frac{1}{1-\lambda},1-\frac{1}{\lambda},
\frac{\lambda}{\lambda-1}\right\rbrace$$
We define $\mu:=1-\lambda$. As we already mentioned in the introduction we have,
$$j = 2^8\frac{(\lambda (\lambda-1)+1)^3}{\lambda^2(1-\lambda)^2}.$$
An easy calculation shows that any element of the set $\Lambda$ gives the same $j$.

Since, $L$ is the splitting field of $f_2$ then $L/K$ is a Galois extension and its Galois group 
$\Gal(L/K)$ is isomorphic to a subgroup of $S_3$. Moreover, since $\Delta (2^4f_2) = k^2\Delta(L/K)$ 
for some $k\in\OK$ and 
\begin{equation}\label{discriminants_relation}
2^4\Delta=\Delta (f_2)=2^8\prod_{1\leq i<j\leq 3}(e_i-e_j)^2
\end{equation}
we see that $\pK\nmid 2\Delta\Rightarrow\pK\nmid \Delta(L/K)$. Also, we observe that $L=K(\lambda)$. 
We summarize the situation in the following proposition,

\begin{proposition}\label{PropositionDivisionFieldProperties}
The $2$--division field $L$ of $E$ is a Galois extension over $K$ unramified at all primes not in 
$S^{(2)}$, and has Galois group isomorphic to a subgroup of $S_3$.
\end{proposition}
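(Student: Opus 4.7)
The statement packages three assertions: $L/K$ is Galois, $\Gal(L/K) \hookrightarrow S_3$, and $L/K$ is unramified outside $S^{(2)}$. The first two are essentially tautological from the construction. Since $L = K(e_1,e_2,e_3)$ is by definition the splitting field over $K$ of the degree-three polynomial $f_2 \in \OK[x]$, the extension $L/K$ is Galois, and $\Gal(L/K)$ acts faithfully on $\{e_1,e_2,e_3\}$ (since these three elements generate $L$), giving the desired embedding into $S_3$.

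The unramifiedness assertion is the substantive part. For this I would invoke the two discriminant relations already displayed in the paragraph preceding the statement. The first, recorded in \eqref{discriminants_relation}, says $\mathrm{disc}(f_2) = 2^{4}\Delta$ where $\Delta$ is the discriminant of the chosen Weierstrass model of $E$. The second, $\mathrm{disc}(2^4 f_2) = k^2\,\Delta(L/K)$ for some $k \in \OK$, identifies $\Delta(L/K)$ up to a square with the polynomial discriminant of an integral polynomial whose splitting field is $L$. Chaining the two shows that every prime of $\OK$ dividing $\Delta(L/K)$ must divide $2\Delta$.

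To convert this into the statement about $S^{(2)}$, fix $\pK \notin S^{(2)}$. Then $\pK \nmid 2$ and, since $E$ has good reduction at $\pK$, we may replace our Weierstrass model by one that is minimal at $\pK$, so that $\pK \nmid \Delta$. This replacement does not affect $L$, because $L$ depends only on the $K$-isomorphism class of $E$ (the Galois action on 2-torsion points is intrinsic to $E$). The previous step then yields $\pK \nmid \Delta(L/K)$, so $\pK$ is unramified in $L/K$.

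The only nontrivial ingredient is the identity $\mathrm{disc}(2^4 f_2) = k^2\,\Delta(L/K)$. The cleanest way to justify it is to monicise $f_2$ via the substitution $x \mapsto x/4$, producing a monic polynomial in $\OK[x]$ with splitting field $L$, and then apply the standard relation $\mathrm{disc}(h) = [\OL : \OK[\alpha]]^2\,\Delta(K(\alpha)/K)$ for $\alpha$ a root of the monicised polynomial, combined with the tower formula for discriminants in $K \subset K(e_1) \subset L$. This is the one place where bookkeeping with leading coefficients and indices is unavoidable, but it is entirely routine; everything else in the proof is immediate from the construction of $L$ and the choice of a good Weierstrass model at each $\pK \notin S^{(2)}$.
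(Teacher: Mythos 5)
Your proposal is correct and follows essentially the same route as the paper: the paper also obtains Galois-ness and the embedding into $S_3$ from $L$ being the splitting field of the cubic $f_2$, and derives unramifiedness outside $S^{(2)}$ from the relations $\Delta(2^4 f_2)=k^2\Delta(L/K)$ and $\Delta(f_2)=2^4\Delta$, concluding $\pK\nmid 2\Delta\Rightarrow\pK\nmid\Delta(L/K)$. The only difference is that you make explicit the step the paper leaves implicit, namely passing to a $\pK$-minimal model at each prime of good reduction and noting that $L$ is independent of the chosen model.
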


If $S_L:=\left\lbrace\bL\subset\mathcal O_L:\bL\mid\pK,\text{ for some }\pK\in S^{(2)}\right\rbrace$, 
then by the above equation (\ref{discriminants_relation}) we deduce that all $e_i-e_j$ are not 
divisible by primes not in $S_L$, and so they are $S_L$--units. As a result, $\lambda$ and $\mu$ are 
both $S_L$--units and are solutions of the $S$--unit equation (\ref{SunitEquation}) over the 
$2$--division field $L$ for the set of primes $S_L$.

For an extension $N/M$ and a set $S_N$ of prime ideals of $N$ we define
\begin{align*}
\mathcal O^*_{N,M,S_N,1}&:=\left\lbrace x\in\mathcal O^*_{N,S_N}|\Norm_{N/M}(x)=1\right\rbrace\\
\mathcal O^*_{N,M,S_N,\pm 1}&:=\left\lbrace x\in\mathcal O^*_{N,S_N}|\Norm_{N/M}(x)=\pm 1 
\right\rbrace
\end{align*}

Finally, for a place $\pK$ we define its absolute value to be,
$$|x|_\pK=\begin{cases}
p^{-f_\pK\ord_\pK(x)}, & \text{if }\pK\text{ is a finite prime.}\\
|\sigma_\pK(x)|, & \text{if }\pK\text{ is a real place.}\\
|\sigma_\pK(x)|^2, & \text{if }\pK\text{ is a complex place.}
\end{cases}$$
where for $\pK$ an infinite place, $\sigma_\pK$ denotes the associated embedding into $\mathbb R$ or 
$\mathbb C$, and $f_\pK$ its residual degree when $\pK$ is an finite prime.

\section{Computation of 2--Division Fields}\label{DivisionFields}

Let $K$ be a number field and $S$ a finite set of primes $K$. We are looking for Galois extensions of 
$K$ with $\Gal(L/K)$ isomorphic to a subgroup of $S_3$. Since $S_3$ is a solvable group we can 
construct $L/K$ as a tower of cyclic extensions. We use Kummer theory to construct cyclic extensions 
even though there are other methods\footnote{Class Field theory can also be used for constructing 
cyclic extensions with the desired properties. See \cite{CohenAdvanced} for a detailed description.}. 
A detailed description of Kummer theory with a computational point of view and all proofs of the 
theorems we use can be found in \cite{CohenAdvanced} and \cite{CohenCourse}.

%


Since a cyclic extension is a tower of prime extensions, we focus only on prime degree extensions. By 
Kummer theory we have,

\begin{proposition}\label{PropositionUnramifiedC2} 
Let $K$ be a number field, $p$ a natural prime number such that $\zeta_{p}\in K$ and $S$ a finite set 
of primes $K$. Let $L=K(\sqrt[p]{a})$ with $a\in K/K^{*p}$; if $L/K$ is unramified at all the primes 
$\pK\not\in S$, then $a\in K(S,p)$.
\end{proposition}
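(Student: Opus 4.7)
The plan is to prove the proposition by a direct local valuation computation. The condition $a \in K(S,p)$ means precisely that $\ord_\pK(a) \equiv 0 \pmod{p}$ for every prime $\pK \notin S$, so it suffices to fix an arbitrary $\pK \notin S$ and show that $p \mid \ord_\pK(a)$, using only the hypothesis that $\pK$ is unramified in $L/K$.

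First I would pick an arbitrary prime $\bL$ of $\OL$ lying above such a $\pK$. Since $\alpha := \sqrt[p]{a}$ is an element of $L^*$, its $\bL$-adic valuation is a well-defined integer, and from $\alpha^p = a$ we get
\begin{equation*}
\ord_\bL(a) = p \cdot \ord_\bL(\alpha),
\end{equation*}
so in particular $p \mid \ord_\bL(a)$. On the other hand, the standard relation between valuations in $L$ and $K$ gives
\begin{equation*}
\ord_\bL(a) = e(\bL \mid \pK) \cdot \ord_\pK(a).
\end{equation*}
Because $\pK \notin S$ and $L/K$ is unramified outside $S$, we have $e(\bL \mid \pK) = 1$, so $\ord_\bL(a) = \ord_\pK(a)$. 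Combining the two displayed equalities yields $p \mid \ord_\pK(a)$, as required.

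Since $\pK \notin S$ was arbitrary, this shows $\ord_\pK(a) \equiv 0 \pmod{p}$ for every prime outside $S$, which is exactly the defining condition for the class of $a$ in $K^*/K^{*p}$ to lie in $K(S,p)$. The hypothesis $\zeta_p \in K$ is not actually needed for this direction; it only serves to place the statement inside the Kummer-theoretic framework of the surrounding section, where the converse (and the bijection between subgroups of $K(S,p)$ and appropriate Kummer extensions) will require it.

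There is essentially no obstacle here: the argument is a one-line consequence of the multiplicativity of $\ord$ under $p$-th powers and the definition of ramification index. The only thing to watch is that $\ord_\bL(\alpha)$ may be negative if $a$ fails to be integral at $\pK$, but this causes no problem since we only need divisibility of an integer by $p$.
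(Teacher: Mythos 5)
Your proof is correct and complete. Note that the paper itself gives no proof of this proposition: it simply states it as a consequence of Kummer theory and defers to the references (Cohen's books), so there is no ``paper proof'' to compare against step by step. Your direct valuation argument is exactly the right elementary route: writing $\ord_\bL(a)=p\,\ord_\bL(\alpha)$ and $\ord_\bL(a)=e(\bL\mid\pK)\,\ord_\pK(a)$ and using $e(\bL\mid\pK)=1$ for $\pK\not\in S$ immediately gives $p\mid\ord_\pK(a)$, and since $\ord_\pK$ descends to a well-defined map $K^*/K^{*p}\to\ZZ/p\ZZ$ the conclusion is independent of the chosen representative of the class of $a$. You are also right that $\zeta_p\in K$ plays no role in this direction; it is only needed so that $L/K$ is a cyclic Kummer extension and so that the converse construction (enumerating all such $L$ from $K(S,p)$) goes through. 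The one small remark worth making explicit is that when $a$ is already a $p$-th power the extension is trivial and the claim is vacuous, and otherwise $[L:K]=p$; neither case affects your computation, which only needs a single prime $\bL$ above $\pK$ and the hypothesis that it is unramified.
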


In case $p$ is an odd prime and $\zeta_p\not\in K$ we have to work with the extension 
$K_z=K(\zeta_p)$. We first focus on $p=3$ because we are able to give a simple defining polynomial of 
the extension. We define $S_{K_z}=\left\lbrace\mathfrak B\subseteq \mathcal O_{K_z}|\exists 
\mathfrak p\in S\text{ s.t.}\mathfrak {B|p}\right\rbrace$. Then we have:

\begin{proposition}\label{PropositionUnramifiedC3}
Let $K$ be a number, $\zeta_3\not\in K$ and $S$ a finite set of prime ideals of $\OK$. Let $L/K$ be a 
Galois 3 degree extension which is unramified at all primes outside $S$. Then there exists $a\in 
K_z(S_{K_z},3)$ such that $N_{K_z/K}(a)=\beta^3$ for $\beta\in K$ and $L=K(\theta)$ with $\theta$ a 
root of the polynomial $f(x)=x^3-3\beta x-Tr_{K_z/K}(a)$.
\end{proposition}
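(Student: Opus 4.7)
The plan is to combine Kummer theory over $K_z$ with a Galois descent down to $K$. Set $L_z:=LK_z$. Since $\gcd([L:K],[K_z:K])=\gcd(3,2)=1$, the fields $L$ and $K_z$ are linearly disjoint over $K$, so $L_z/K$ is Galois of degree $6$ with $\Gal(L_z/K)\cong\Gal(L/K)\times\Gal(K_z/K)$. In particular $L_z/K_z$ is cyclic of degree $3$, and since $\zeta_3\in K_z$, Kummer theory gives $L_z=K_z(\sqrt[3]{a})$ for some $a\in K_z^*$. As the base change of $L/K$, the extension $L_z/K_z$ is unramified outside $S_{K_z}$, so Proposition \ref{PropositionUnramifiedC2} places the Kummer class $[a]$ in $K_z(S_{K_z},3)$.

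The heart of the proof is the descent. Let $\tau$ generate $\Gal(K_z/K)$. Since $L_z/K$ is Galois, $\tau$ lifts to $L_z$, forcing $\sqrt[3]{\tau(a)}\in L_z$ and hence $\tau(a)\equiv a^k\pmod{K_z^{*3}}$ with $k\in\{0,\pm 1\}$ (applying $\tau$ twice yields $k^2\equiv 1\pmod 3$). The value $k=0$ collapses $L_z$ to $K_z$; and $k=+1$ forces the class $[a]$ to be $\tau$-fixed in $K_z^*/K_z^{*3}$, which by Hilbert~$90$ (applied to the exact sequence $1\to K_z^{*3}\to K_z^*\to K_z^*/K_z^{*3}\to 1$) produces $a'\in K^*$ with $[a]=[a']$, so $L_z=K_z\cdot K(\sqrt[3]{a'})$; but then $K(\sqrt[3]{a'})$ is a non-Galois pure cubic subfield and $L_z$ has no Galois cubic subfield, contradicting $L\subset L_z$. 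Thus $k=-1$, i.e.\ $N_{K_z/K}(a)=a\tau(a)\in K_z^{*3}$. Since $N_{K_z/K}(a)\in K^*$ and the natural map $K^*/K^{*3}\hookrightarrow K_z^*/K_z^{*3}$ is injective (as $\gcd([K_z:K],3)=1$), we conclude $N_{K_z/K}(a)=\beta^3$ for some $\beta\in K$.

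For the explicit polynomial, let $\tilde\tau$ be the unique order-$2$ lift of $\tau$ in $\Gal(L_z/K)$, set $t:=\sqrt[3]{a}$, and define $\theta:=t+\tilde\tau(t)$. The element $u:=t\,\tilde\tau(t)$ is fixed by both $\tilde\tau$ and by $\sigma$ (a generator of $\Gal(L_z/K_z)$, acting by $t\mapsto\zeta_3 t$), because these commute in the abelian group $\Gal(L_z/K)$; hence $u\in L_z^{\Gal(L_z/K)}=K$. Since $u^3=a\tau(a)=\beta^3$ and $\mu_3\cap K=\{1\}$, we get $u=\beta$, so $\tilde\tau(t)=\beta/t$ and $\theta\in L_z^{\langle\tilde\tau\rangle}=L$. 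Expanding
$$\theta^3=t^3+\tilde\tau(t)^3+3u\bigl(t+\tilde\tau(t)\bigr)=\mathrm{Tr}_{K_z/K}(a)+3\beta\theta$$
shows $\theta$ is a root of $f(x)=x^3-3\beta x-\mathrm{Tr}_{K_z/K}(a)$, and the equality $\deg_K\theta=3=[L:K]$ gives $L=K(\theta)$.

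The main obstacle I anticipate is the descent step, both in ruling out the case $k=+1$ (via the non-Galois-subfield argument) and in passing from the intermediate $\beta\in K_z$ to $\beta\in K$. The coprimality $\gcd(2,3)=1$ is doing essential work in the injectivity $K^*/K^{*3}\hookrightarrow K_z^*/K_z^{*3}$; without it, the transition from a cube in $K_z$ to a cube in $K$ would require a more delicate analysis of Kummer representatives.
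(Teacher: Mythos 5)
Your proof is correct. The paper itself gives no argument for this proposition (it defers to Cohen's books, where exactly this Kummer-plus-descent construction appears), and your route --- base change to $K_z=K(\zeta_3)$, Kummer theory over $K_z$, determination of the $\tau$-action on the class $[a]$, and the explicit generator $\theta=t+\tilde\tau(t)$ with $t\tilde\tau(t)=\beta$ --- is the standard one the paper is implicitly relying on. One small imprecision: to rule out $k=+1$ you need surjectivity of $K^*/K^{*3}\to(K_z^*/K_z^{*3})^{\tau}$, which is not literally Hilbert 90 for the sequence you cite (that requires $H^1(\langle\tau\rangle,K_z^{*3})=0$, not $H^1(\langle\tau\rangle,K_z^{*})=0$); it is easier and cleaner to note that a $\tau$-fixed class satisfies $[a]=[N_{K_z/K}(a)]^2$ since $2$ is invertible mod $3$, which hands you the representative $a'\in K^*$ directly.
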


By Propositions \ref{PropositionUnramifiedC2} and \ref{PropositionUnramifiedC3} we know how to 
construct all the Galois $C_2$ and $C_3$ extensions of a number field $K$ which are unramified 
outside a finite set $S$ of prime ideals of $K$. For the $S_3$ case we use the fact that $S_3$ is a 
solvable group and a tower of a quadratic and cubic extensions. Even though we care about $S_3$ 
extensions in the rest of the section we describe an algorithm of constructing dihedral extensions 
$D_p$ with $p$ an odd prime $p\equiv 3\mod 4$.

Let $K\subset M\subset L$ be a tower of Galois extensions where $\Gal(M/K)\simeq C_2$ and $\Gal(L/M) 
\simeq C_p$ with $p$ be an odd prime $p\equiv 3\mod4$. The details of computing general $C_p$ 
extensions can be found in \cite{CohenAdvanced}. We fix the notation such that $\Gal(L/M)= \langle 
\sigma\rangle$, $\Gal (M/K)=\langle\tau\rangle$, $f(x)\in M[x]$ is a defining polynomial of 
$L/M$ with the coefficient of $x^{p-1}$ to be 0, $\ft =\tau (f)$ and $\Lt$ is the splitting field of 
$\ft$. We use the same notation for lifts of $\tau$ and $\sigma$ to the absolute Galois group $\Gal 
(\bar K/K)$.

\begin{proposition}
We have that $\Lt/M$ is a Galois $C_p$ extension.
\end{proposition}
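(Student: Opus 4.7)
The plan is a pure transport-of-structure argument using the lifted automorphism $\tau \in \Gal(\bar K/K)$. First I would record the basic compatibility: since $M/K$ is Galois, every lift of $\tau$ to the absolute Galois group restricts to the original $\tau \in \Gal(M/K)$, and in particular $\tau(M) = M$. Consequently $f^\tau = \tau(f)$ is a well-defined polynomial in $M[x]$ of the same degree $p$ as $f$, and it is irreducible over $M$ because $\tau$ is a ring automorphism and $f$ is irreducible.

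Next I would identify $L^\tau$ with the image $\tau(L) \subset \bar K$. Concretely, if $\alpha_1,\ldots,\alpha_p$ are the roots of $f$ in $L$, then $\tau(\alpha_1),\ldots,\tau(\alpha_p)$ are the roots of $f^\tau$, so
\[
L^\tau = M\bigl(\tau(\alpha_1),\ldots,\tau(\alpha_p)\bigr) = \tau\bigl(M(\alpha_1,\ldots,\alpha_p)\bigr) = \tau(L),
\]
where the middle equality uses $\tau(M) = M$.

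Finally, the field isomorphism $\tau \colon L \xrightarrow{\sim} L^\tau$ carries $M$ onto $M$, so it transports the Galois structure of $L/M$ to $L^\tau/M$. Explicitly, $L^\tau/M$ is Galois, and $\Gal(L^\tau/M) = \tau \langle \sigma \rangle \tau^{-1}$, which is again cyclic of order $p$. Hence $L^\tau/M$ is a Galois $C_p$ extension. There is no serious obstacle; the only point requiring mild attention is that $\tau$ is only specified up to its action on $M$ in $\Gal(M/K)$ and so the lift is not unique, but different lifts agree on $M$ and hence send $L$ to the same subfield $\tau(L)$ of $\bar K$, so the object $L^\tau$ is unambiguous.
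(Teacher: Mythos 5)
Your proof is correct, and it is the natural argument behind this statement; the paper itself dismisses the proof as ``Clear,'' so there is nothing to contrast it with. Your transport-of-structure reasoning (using $\tau(M)=M$ to identify $L^{\tau}=\tau(L)$ and conjugating $\Gal(L/M)=\langle\sigma\rangle$ by $\tau$) supplies exactly the details the paper omits, including the correct observation that the ambiguity in lifting $\tau$ is harmless because $L/M$ is normal.
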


\begin{proof}
Clear.
\end{proof}

\begin{theorem}\label{TheoremIsGalois}
The extension $L/K$ is Galois if and only if $L=\Lt$.
\end{theorem}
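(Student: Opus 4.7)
The plan is to prove both implications using the standard embedding criterion for normality, applied to the tower $K\subset M\subset L$. The essential observation I would exploit is that because $M/K$ is Galois, any element of $\Gal(\bar K/K)$ fixes $M$ setwise, so it sends the splitting field of $f\in M[x]$ to the splitting field of its image.

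For the forward direction, I would assume $L/K$ is Galois and consider the fixed lift $\tau\in\Gal(\bar K/K)$. Since $L/K$ is normal we have $\tau(L)=L$. On the other hand, $\tau$ sends $M$ to $M$ (as $M/K$ is Galois) and sends $f$ to $\ft$, so it must send the splitting field of $f$ over $M$ to the splitting field of $\ft$ over $M$, i.e.\ $\tau(L)=\Lt$. Combining these two identifications yields $L=\Lt$.

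For the converse, I would suppose $L=\Lt$ and check normality of $L/K$ by showing that every $K$-embedding $\phi:L\hookrightarrow \bar K$ satisfies $\phi(L)=L$. Since $M/K$ is Galois, $\phi|_M$ is either the identity or $\tau$. In the first case $\phi$ is an $M$-embedding of $L$, and since $L/M$ is Galois we get $\phi(L)=L$. In the second case $\phi$ fixes the coefficients of $f$ only up to $\tau$, so $\phi(L)$ is a splitting field of $\phi(f)=\ft$ over $\phi(M)=M$; this is $\Lt$, which equals $L$ by assumption. Either way $\phi(L)=L$, proving normality.

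The argument is essentially bookkeeping; the main subtlety to flag is the coherent choice of lifts of $\tau$ and $\sigma$ to $\Gal(\bar K/K)$, and making precise the statement that $\tau$ maps the splitting field of $f$ over $M$ to the splitting field of $\ft$ over $M$ (which uses that $\tau(M)=M$). Once this is set up cleanly, both directions fall out from the embedding criterion with no further computation required.
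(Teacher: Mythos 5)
Your argument is correct: both directions follow from the embedding criterion exactly as you describe, and the only point needing care (that $\tau(M)=M$, so $\tau$ carries the splitting field of $f$ over $M$ to that of $\ft$ over $M$) is the one you flag. The paper itself dismisses this statement with ``Clear,'' so your write-up is simply the standard argument the author had in mind, made explicit.
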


\begin{proof}
Clear.
\end{proof}

%

Now we assume that $L/K$ is Galois. The group $\Gal(L/K)$ acts on the roots of $f$ as a subgroup of 
$S_p$. Since $p\equiv3\mod4$ we know that $C_p$ is a subgroup of the alternative group $A_p$ but 
$D_p$ is not. The goal is to find an irreducible polynomial $h$ of degree $p$ over $K$ with splitting 
field $K(h)$ a subfield of $L$. Considering discriminants, we are able to check the case 
$\Gal(K(h)/K)=C_p$ and as a result to distinguish\footnote{$D_p$ does not have a normal subgroup of 
order 2 while $C_{2p}$ has one.} $\Gal(L/K)=D_p$ and $C_{2p}$.

For the special case $f=\ft$(i.e. $f\in K[x]$) we have:

\begin{lemma}\label{LemmafoverK}
Let $f$ be a defining polynomial of $L/M$ with $f\in K[x]$. Then $\Delta (f)\in K^*\setminus 
K^{*2}$ if and only if $\Gal(L/K)\simeq D_p$.
\end{lemma}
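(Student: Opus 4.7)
The plan is to identify $\Gal(L/K)$ with the Galois group of the splitting field of $f$ acting on the roots, and then to use the classical criterion that for an irreducible polynomial $f$ of degree $p$ over $K$, the quantity $\Delta(f)$ lies in $K^{*2}$ iff the Galois group of its splitting field over $K$ is contained in $A_p$. The sign condition $p\equiv 3\pmod 4$ will be exactly what distinguishes $D_p$ from $C_{2p}$ at the level of parity of a reflection.

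First I would check that $f$ is irreducible over $K$: since $f\in K[x]$ is irreducible over $M$ and $M/K$ is quadratic (with $p$ odd), $f$ must remain irreducible over $K$. Let $\theta$ be a root and set $K':=K(\theta)$, so $[K':K]=p$. Because $L=M(\theta)$ by the definition of $f$, we have $L=MK'$, hence $[L:K]=2p$ and $\Gal(L/K)$ has order $2p$; so it is either $C_{2p}$ or $D_p$. Next I would locate the splitting field $F$ of $f$ over $K$ inside $L$: $F$ is the Galois closure of $K'/K$, so $K'\subseteq F\subseteq L$ and $[F:K]\in\{p,2p\}$.

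Now I would split into the two cases. If $\Gal(L/K)\simeq C_{2p}$, then every subgroup is normal, so $K'/K$ is already Galois and $F=K'$, giving $\Gal(F/K)\simeq C_p$. As a transitive subgroup of $S_p$ generated by a $p$-cycle, $C_p\subset A_p$ since for $p$ odd a $p$-cycle is an even permutation; hence $\Delta(f)\in K^{*2}$. If instead $\Gal(L/K)\simeq D_p$, then the index-$p$ subgroup of order $2$ is not normal, so $K'/K$ is not Galois, forcing $F=L$ and $\Gal(F/K)\simeq D_p$. Here the key computation is the parity of a reflection viewed as a permutation of the $p$ roots: a reflection fixes exactly one root and permutes the remaining $p-1$ in $(p-1)/2$ transpositions, so it has sign $(-1)^{(p-1)/2}$. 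The hypothesis $p\equiv 3\pmod 4$ makes $(p-1)/2$ odd, so reflections are odd permutations and $D_p\not\subset A_p$; therefore $\Delta(f)\notin K^{*2}$.

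Combining the two implications finishes the lemma. The only non-routine step is the final parity computation together with the care needed in the case $\Gal(L/K)\simeq C_{2p}$ to argue that $F=K'$ rather than $F=L$; this uses normality of subgroups in $C_{2p}$ in an essential way and explains why the hypothesis $f\in K[x]$ (not merely $f\in M[x]$) is needed so that we may speak of a splitting field of $f$ over $K$ at all.
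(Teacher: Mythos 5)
Your proof is correct and takes essentially the same route as the paper's: both identify the splitting field $K(f)$ of $f$ over $K$ inside $L$ and apply the criterion that $\Delta(f)\in K^{*2}$ iff $\Gal(K(f)/K)\subseteq A_p$, with $p\equiv 3\pmod 4$ ensuring reflections act as odd permutations so that $D_p\not\subseteq A_p$. The paper's proof is just a one-line compression of your argument, leaving implicit the details (irreducibility of $f$ over $K$, the identification of $K(f)$ as $K(\theta)$ or $L$ in the two cases, and the parity computation) that you spell out.
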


\begin{proof}
Let $K(f)$ be the splitting field of $f$ over $K$. Since $L/K$ is Galois and $f$ a defining poynomial 
of $L/M$ we have $K\subset K(f) \subset L$. Then $\Delta (f)\in K^{*2}\Leftrightarrow\Gal(K(f)/K)=C_p 
\Leftrightarrow\Gal(L/K)=C_{2p}$.
\end{proof}

Now we assume that $f\neq\ft$. We use the roots of $f$ and $f^\tau$ to define a polynomial $h\in 
K[x]$ of degree $p$ whose splitting field is $L$. Write $f(x)=(x-a_1)(x-a_2)\cdots(x-a_p)$ and 
$\ft(x)=(x-b_1)(x-b_2)\cdots(x-b_p)$ with $a_i,b_j\in L$. 

In case $\Gal(L/K)=D_p$ we may assume that $\sigma$ permutes the roots $a_i$ and $b_j$ as $\sigma 
=(a_1 a_2\cdots a_p)(b_1 b_2\cdots b_p)$. Also we may assume that $\tau (a_1)=b_1$ and 
$\tau\sigma\tau =\sigma^{-1}$. Then by induction we get that $\tau (a_i)=b_{p+2-i\mod p}$ for $i=1,
\cdots ,p$. We define $h$ to be,
\begin{align}\label{definitionh}
h(x)&=(x-a_1-b_1)(x-a_2-b_2)\cdots(x-a_p-b_p)\\
&=(x-a_1-\sigma (a_1))(x-a_2-\sigma (a_2))\cdots(x-a_p-\sigma(a_p))\nonumber
\end{align}
with\footnote{If $a_1+b_1=0$ then $a_1+b_2\neq 0$ since $b_1\neq b_2$. So we can change $h(x)$ with 
$h^\prime (x)=(x-a_1-\sigma (b_1))(x-a_2-\sigma (b_2))\cdots (x-a_p-\sigma (b_p))$.} $a_1+b_1\neq 0$. 
We can easily check that $\tau (h)=h$ and $\sigma (h)=h$, so $h\in K[x]$. We recall that we have 
assumed that the coefficient of $x^{p-1}$ of $f$ is equal to 0. 

\begin{lemma}
If $h$ and $f$ are as above then $h$ is irreducible in $K[x]$ and $K\subset K(h)\subset L$. 
\end{lemma}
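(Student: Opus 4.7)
The plan is to establish the two assertions in the order (1) $K(h)\subset L$, (2) $h$ is irreducible, with the latter resting on a distinctness argument for the roots $c_i:=a_i+b_i$ of $h$. The inclusion $K(h)\subset L$ is immediate: each $c_i=a_i+b_i$ lies in $L$, since $a_i$ and $b_j$ are roots of $f$ and $\ft$ respectively and $L=\Lt$ contains all of them; hence the splitting field of $h$ over $K$ sits inside $L$.

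For irreducibility I would first record the fact (already observed before the statement) that $\sigma$ cyclically permutes the $c_i$: since $\sigma=(a_1\,a_2\cdots a_p)(b_1\,b_2\cdots b_p)$ in the assumed realisation of $\Gal(L/K)\simeq D_p$, we have $\sigma(c_i)=a_{i+1}+b_{i+1}=c_{i+1}$ with indices mod $p$. Thus $\langle\sigma\rangle$ acts transitively on $\{c_1,\dots,c_p\}$, and a fortiori so does $\Gal(L/K)$.

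The heart of the argument is showing that the $c_i$ are pairwise distinct. Suppose $c_i=c_j$ for some $i\neq j$; applying powers of $\sigma$ we get $c_{i+k}=c_{j+k}$ for every $k$, so with $d:=j-i\not\equiv 0\pmod p$ the map $i\mapsto i+d$ preserves the value of $c_i$. Because $p$ is prime and $\gcd(d,p)=1$, iteration forces all $c_i$ to coincide with a single value $c\in L$. Now the hypothesis that the $x^{p-1}$-coefficient of $f$ is zero gives $\sum_i a_i=0$, and applying $\tau$ yields $\sum_i b_i=0$ as well; therefore $\sum_i c_i=0$, so $pc=0$, and since $K$ has characteristic zero this forces $c=0$. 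But the definition of $h$ was arranged (via the footnote preceding the lemma) so that $a_1+b_1=c_1\neq 0$, a contradiction. Hence the $c_i$ are distinct.

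With distinctness in hand, the transitive action of $\Gal(L/K)$ on the $p$ roots of $h$ shows that the orbit of $c_1$ has size $p$, so the minimal polynomial of $c_1$ over $K$ has degree $\geq p$; since $c_1$ is a root of the monic degree-$p$ polynomial $h\in K[x]$, this minimal polynomial must equal $h$, and $h$ is irreducible. The main obstacle is clearly the distinctness step, which is where both the primality of $p$, the normalisation $\sum a_i=0$, and the non-vanishing convention $a_1+b_1\neq 0$ all enter in an essential way; once it is settled, transitivity and the inclusion $K(h)\subset L$ are routine.
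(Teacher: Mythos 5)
Your proof is correct and rests on the same key computation as the paper's: using $\sum_i a_i=\sum_i b_i=0$ together with the convention $a_1+b_1\neq 0$ to rule out the degenerate case where all $c_i=a_i+b_i$ coincide. The packaging differs only slightly --- the paper assumes $h$ reducible, deduces a root $\delta\in K$ and applies $\sigma$ to force all $c_i=\delta$, whereas you first prove the $c_i$ are pairwise distinct and then invoke transitivity of the $\langle\sigma\rangle$-action --- but this is essentially the same argument (and your version spells out the degree bookkeeping a bit more carefully).
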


\begin{proof}
The fact that $K\subset K(h)\subset L$ comes from the definition of $h$ and the fact that $L/K$ is 
Galois.

Let assume that $h$ is not irreducible in $K[x]$. Since $K\subset K(h)\subset L$, $[L:K]=2p$ and the 
degree of $h$ is $p$ we conclude that $h$ has a root in $K$. Without loss of generality we assume 
that $a_1+b_1=\delta\in K$. Applying $\sigma$ to $\delta$ we have $a_1+b_1=a_2+b_2=\cdots a_p+b_p= 
\delta$. Since the coefficient of $x^{p-1}$ in $f$ is equal to 0 we have that $a_1+a_2\cdots+a_p=b_1 
+b_2+\cdots +b_p=0$ which means $(a_1+b_1)+(a_2+b_2)\cdots+(a_p+b_p)=0\Rightarrow p\delta =0$ and we 
get $\delta =0$. So, we have $a_1+b_1=0$, contradiction.
\end{proof}

We summarize the previous results in the following theorem,

\begin{theorem}\label{TheoremS_3Distinction}
Let $L/K$ be a Galois extension of degree $2p$ for $p\equiv 3\mod4$ and $M$ its quadratic subfield. Let $\tau\in\Gal(L/K)$ be an element of order 2, $f\in M[x]$ a defining polynomial of $L/M$ such that the coefficient of $x^{p-1}$ is zero, $\ft=\tau(f)$ and $h$ as in (\ref{definitionh}). Then we have,
\begin{enumerate}[(i)]
\item If $f=f^\tau$ then $\Delta (f)\in K^*\setminus K^{*2}$ if and only if $\Gal(L/K)\simeq D_p$.
\item If $f\neq f^\tau$ then $h$ is irreducible and $h\in K[x]$. Moreover, $\Delta (h)\in 
K^*\setminus K^{*2}$ if and only if $\Gal(L/K)\simeq D_p$.
\end{enumerate}
\end{theorem}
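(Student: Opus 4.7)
The plan is to handle the two parts separately. For part (i), nothing new is needed: the statement is precisely Lemma~\ref{LemmafoverK}. For part (ii), the preceding lemma already gives that $h\in K[x]$ is irreducible of degree $p$, so what remains is the discriminant criterion.

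To establish the discriminant criterion, I would appeal to the classical fact that, for an irreducible separable polynomial $h\in K[x]$ of degree $p$, one has $\Delta(h)\in K^{*2}$ if and only if the image of $\Gal(\mathrm{spl}(h)/K)$ in $S_p$ (via its action on the roots of $h$) lies in $A_p$. With $h=\prod_{i=1}^{p}(x-a_i-b_i)$, the task thus reduces to computing how the generators $\sigma$ (of order $p$) and $\tau$ (of order $2$) of $\Gal(L/K)$ permute the $p$ roots $a_i+b_i$. Since $\sigma(a_i)=a_{i+1}$ and $\sigma(b_i)=b_{i+1}$ (indices modulo $p$), $\sigma$ acts on the roots as a single $p$-cycle, which is even because $p$ is odd.

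The delicate point is the action of $\tau$. In the $D_p$ case, the relation $\tau\sigma\tau=\sigma^{-1}$ combined with $\tau(a_1)=b_1$ forces $\tau(a_i)=b_{p+2-i\bmod p}$, so $\tau$ permutes the roots of $h$ via the involution $i\mapsto p+2-i\bmod p$; this has exactly one fixed index and partitions the other $p-1$ indices into $(p-1)/2$ transpositions. Its sign is $(-1)^{(p-1)/2}$, and the hypothesis $p\equiv 3\bmod 4$ makes this $-1$, so $\tau$ is odd, hence the image of $D_p$ in $S_p$ is not contained in $A_p$ and $\Delta(h)\notin K^{*2}$. In the $C_{2p}$ case, the commutation $\tau\sigma=\sigma\tau$ together with $\tau(a_1)=b_1$ gives instead $\tau(a_i)=b_i$, so $\tau$ fixes every root of $h$; the image of $\Gal(L/K)$ on the roots is generated by the $p$-cycle $\sigma$ alone and lies in $A_p$, so $\Delta(h)\in K^{*2}$. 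Combining the two cases yields the desired equivalence.

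The step requiring the most care is the sign computation for $\tau$ in the $D_p$ case, which is precisely where the hypothesis $p\equiv 3\bmod 4$ is essential (for $p\equiv 1\bmod 4$ the sign would be $+1$ and the criterion would fail). A minor bookkeeping point is to verify that the splitting field of $h$ equals $L$ in the $D_p$ case, so that the Galois group acting on the roots is genuinely $D_p$; this follows because the order-$2$ subgroups of $D_p$ have trivial common intersection, forcing the compositum of the fields $K(a_i+b_i)$ to fill out all of $L$.
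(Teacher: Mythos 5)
Your proof is correct and follows the same route as the paper: the theorem there is stated as a summary of Lemma~\ref{LemmafoverK}, the irreducibility lemma for $h$, and the discriminant criterion via the observation that $C_p\subseteq A_p$ while $D_p\not\subseteq A_p$ when $p\equiv 3\bmod 4$. The only difference is that you explicitly carry out the parity computation for the action of $\tau$ on the roots of $h$ (and the verification that $\tau$ fixes the roots in the $C_{2p}$ case), details the paper asserts without proof.
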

  
Since the $p$--Selmer group of any number field with respect to any finite set of primes $S$ is a 
finite abelian group, we can deduce now the following,

\begin{theorem}\label{TheoremFiniteExtensions}
Let $K$ be a number field, $S$ a finite set of prime of $K$. Then the number of Galois extensions 
$L/K$ unramified outside $S$ with Galois group equal to $C_2$, $C_3$ or $S_3$ is finite.
\end{theorem}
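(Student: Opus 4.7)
The plan is to handle the three possible Galois groups in turn, in each case reducing the classification of unramified extensions to the finiteness of a suitable Selmer group, and to treat $S_3$ via its tower structure.

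First I would dispose of the cyclic cases. Since $\zeta_2 = -1 \in K$, Proposition \ref{PropositionUnramifiedC2} (with $p=2$) shows that every $C_2$ extension of $K$ unramified outside $S$ has the form $K(\sqrt{a})$ for some $a \in K(S,2)$. This group is finite, so the number of such extensions is finite. For $C_3$ extensions there are two subcases. If $\zeta_3 \in K$, Proposition \ref{PropositionUnramifiedC2} (with $p=3$) parametrizes them by elements of the finite group $K(S,3)$. If $\zeta_3 \notin K$, Proposition \ref{PropositionUnramifiedC3} shows that every such $L/K$ is defined by a polynomial whose coefficients are $\operatorname{Tr}_{K_z/K}(a)$ and (a cube root of) $N_{K_z/K}(a)$ for some $a \in K_z(S_{K_z},3)$; since $K_z(S_{K_z},3)$ is finite, only finitely many such polynomials arise, and hence only finitely many $L/K$.

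For the $S_3$ case I would use the tower structure. Any $S_3$ extension $L/K$ contains a unique quadratic subfield $M$, namely the fixed field of the unique index--$2$ subgroup $A_3 \subset S_3$. Since $L/K$ is unramified outside $S$, the quotient extension $M/K$ is also unramified outside $S$, and the sub-extension $L/M$ is unramified outside $S_M := \{\mathfrak{B} \subset \mathcal O_M : \mathfrak B \mid \mathfrak p \text{ for some } \mathfrak p \in S\}$. By the $C_2$ case already treated, there are only finitely many possibilities for $M$. For each such $M$, $L/M$ is a cyclic cubic extension unramified outside the finite set $S_M$, so by the $C_3$ analysis applied over $M$ (in either subcase, according to whether $\zeta_3$ lies in $M$) there are only finitely many possibilities for $L$. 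Combining, there are only finitely many $S_3$ extensions $L/K$ unramified outside $S$.

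The only step requiring any care is verifying the ramification behaviour in the tower for $S_3$: we need that if $L/K$ is unramified at $\mathfrak p \notin S$ then so are $M/K$ and $L/M$ at the primes above $\mathfrak p$. This is immediate from the multiplicativity of ramification indices in towers, so this is not really an obstacle. The whole argument reduces to the fact, used repeatedly in the preceding propositions, that the $p$-Selmer groups $K(S,p)$ and $K_z(S_{K_z},p)$ are finite, which is the content of \cite{CohenAdvanced} and is invoked in the statement of the theorem itself.
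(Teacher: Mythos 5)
Your proof is correct and follows essentially the same route as the paper, which deduces the theorem from the finiteness of the Selmer groups $K(S,p)$ and $K_z(S_{K_z},3)$ via Propositions \ref{PropositionUnramifiedC2} and \ref{PropositionUnramifiedC3}, treating $S_3$ extensions as quadratic--cubic towers; the paper leaves these steps implicit and you have simply written them out.
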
 

Now we have all the ingredients to construct all possible 2--division fields of the elliptic curves 
we are looking for. Propositions \ref{PropositionUnramifiedC2} and \ref{PropositionUnramifiedC3}
explain how to get the 2--division fields with Galois group $C_2$ or $C_3$. Using the same 
propositions we construct all extensions $L/K$ with degree $6$ and unramified outside $S$. By 
Theorems \ref{TheoremIsGalois} and \ref{TheoremS_3Distinction} we can understand 
which of these extensions are Galois with Galois group isomorphic to $S_3$.

\section{Solving $S$--unit equations}\label{UnitEquation}

We recall that $K$ is a number field, $S$ a finite set of prime ideals of $K$, $E$ is an elliptic 
curve over $K$ with good reduction outside $S$, $\lambda$ is the $\lambda$--invariant of $E$ and 
$\mu=1-\lambda$, $L$ is the $2$--division field of $E$, $S_L=\left\lbrace\bL\subset\mathcal O_L:\bL 
\mid\pK,\text{ for some }\pK\in S^{(2)}\right\rbrace$ and $\SuL$ the $S$--unit group of $L$ with 
respect to $S_L$.

In this section we show that $\lambda$ and $\mu$ lie in a smaller subgroup than the full $\SuL$. That 
speeds up both the second and third steps of the algorithm. We will not present the algorithmic 
methods of solving a general $S$--unit equation since this has been described fully elsewhere as we 
have already mentioned(\cite{DeWegerthesis}, \cite{DeWeger87}, \cite{TzanakisDeWeger89}, 
\cite{TzanakisDeWeger91}, \cite{TzanakisDeWeger92},\cite{Smart95}, \cite{Smart99},\cite{Wildanger00}, 
\cite{Smartbook}). 

The group $\pgl$ acts on $K$ with the usual way, 
$$\left(\begin{matrix}
a & b\\
c & d
\end{matrix}\right)\cdot z=\frac{az+b}{cz+d}$$
for $z\in K$ and $\left(\begin{matrix}
a & b\\
c & d
\end{matrix}\right)\in\pgl$.

Since we have assumed that $j\neq 0, 1728$, by equation (\ref{j_lambda}) we see that all the elements 
of $\Lambda$ are distinct. We define 
$$T=\left(\begin{matrix} 
0 & 1\\
1 & 0\\
\end{matrix}\right), 
R=\left(\begin{matrix}
0 & 1\\
-1 & 1\\
\end{matrix}\right)\in\pgl$$
where $T$ has order 2 and $R$ has order 3. If $G:=\langle T,R\rangle\simeq S_3$ then $G$ acts on 
$\Lambda$. Actually, $G$ acts as a permutation group on the roots of the polynomial\footnote{$F_j(x)$ 
may not be irreducible. The crucial thing is that $F_j(x)$ has coefficients in $K$.} $F_j(x)= 
\displaystyle\prod_{\lambda^\prime\in\Lambda}(x-\lambda^\prime)=(x^2-x+1)^3-\frac{j}{2^8} x^2
(1-x)^2\in K[x]$. Because $L=K(\lambda)$ the splitting field of $F_j(x)$ is $L$. From the above we 
can see that $\Gal(L/K)$ acts on the set $\Lambda$ in the same way as a subgroup of $G$. 

We divide into cases, according to the structure of $\Gal(L/K)$.

\subsection{$\Gal(L/K)$ is trivial}

When $\Gal(L/K)$ is trivial the elliptic curve has full 2--torsion and $\lambda,\mu\in K$. We have to 
solve (\ref{SunitEquation}) for $\lambda,\mu\in\mathcal O^*_{K,S^{(2)}}$. 

\subsection{$\Gal(L/K)\simeq C_2$}

In case $\Gal(L/K)\simeq C_2$, the following theorem holds,

\begin{theorem}\label{TheoremC2}
Suppose that $\Lambda\subset\SuL$ is associated with an elliptic curve defined over $K$. Then there 
exists $\lambda\in\Lambda$ such that $\lambda\in\SuLKo$.

Conversely, let $\lambda\in\SuL$ and $j,w,\mu$ as above. If $\mu\in\SuL$ and $\lambda\in\SuLKo$ then 
$j\in\SiK$. Moreover, if $w\in K(S,6)_{12}$ then $j$ is the $j$--invariant of an elliptic with good 
reduction outside $S^{(6)}$.
\end{theorem}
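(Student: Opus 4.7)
For the forward direction, I would exploit the simply transitive action of $G = \langle T, R\rangle \simeq S_3$ on the six-element set $\Lambda$: this action is simply transitive because $j \neq 0, 1728$ makes all six values distinct, so $g \mapsto g\cdot\lambda$ is a bijection $G \to \Lambda$. Since every element of $\Lambda$ is a rational function of $\lambda$ with $\ZZ$-coefficients, the Galois action commutes with the $G$-action; writing $\sigma(\lambda) = \tau_\sigma\cdot\lambda$ for the unique $\tau_\sigma \in G$, one then has $\sigma(g\cdot\lambda) = g\tau_\sigma\cdot\lambda$ for every $g \in G$. Because $\Gal(L/K) \simeq C_2$, $\tau_\sigma$ is an involution of $S_3$, and all three involutions of $S_3$ are conjugate. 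I would therefore pick $g \in G$ with $g\tau_\sigma g^{-1} = T$ and set $\lambda' := g\cdot\lambda \in \Lambda \subset \SuL$; this gives $\sigma(\lambda') = g\tau_\sigma\cdot\lambda = Tg\cdot\lambda = T\cdot\lambda' = 1/\lambda'$, whence $\lambda'\sigma(\lambda') = 1$ and $\lambda' \in \SuLKo$.

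For the converse, the hypothesis $\lambda \in \SuLKo$ says exactly $\sigma(\lambda) = 1/\lambda$, i.e. $\sigma$ acts on $\lambda$ as $T$. Because $j$ is $G$-invariant (all elements of $\Lambda$ share the same $j$), applying $\sigma$ to the defining expression for $j$ yields $\sigma(j) = j$ and hence $j \in K$. The next step is to extract a clean $K$-rational formula. I would introduce $u := \lambda + \sigma(\lambda) = \operatorname{Tr}_{L/K}(\lambda) \in K$ and $v := \mu\sigma(\mu) = \Norm_{L/K}(\mu) \in K$. Using $\lambda^2 = u\lambda - 1$ one obtains $\lambda^2-\lambda+1 = (u-1)\lambda$, and using $\sigma(\mu) = -\mu/\lambda$ one obtains $v = -\mu^2/\lambda$ and consequently $u-2 = (\lambda-1)^2/\lambda = \mu^2/\lambda = -v$. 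Substituting these identities into the definition of $j$ collapses the $L$-expression to the $K$-expression
\begin{equation*}
j = 2^8\,\frac{(u-1)^3}{u-2} = -2^8\,\frac{(1-v)^3}{v}.
\end{equation*}

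Integrality at primes $\pK \not\in S^{(2)}$ is immediate from the original formula: at every prime $\bL$ of $L$ above such a $\pK$ one has $\bL \not\in S_L$, so $\lambda$ and $\mu$ are $\bL$-units, $2$ is a $\pK$-unit, $\lambda^2-\lambda+1$ is $\bL$-integral, and therefore $\ord_\pK(j) = \ord_\bL(j) \geq 0$. For $\pK \mid 2$ with $\pK \not\in S$ I would combine the clean formula above with the constraint $\lambda\sigma(\lambda) = 1$ (which forces $\ord_\bL(\lambda) = 0$ at every $\sigma$-fixed $\bL$) and the hypothesis $\mu \in \SuL$ (which controls the valuations of $v$) to conclude $\ord_\pK(j) \geq 0$; I expect this valuation bookkeeping at primes above $2$ to be the main technical obstacle of the proof. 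Once $j \in \SiK$ is established, the ``moreover'' clause is immediate from Proposition~\ref{PropCremonaLingham} applied to $(j, w)$.
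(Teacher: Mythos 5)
Your argument is essentially the paper's: the forward direction is precisely the paper's appeal to the compatibility of the $G$-action and the Galois action on $\Lambda$ (you merely spell it out by conjugating the involution $\tau_\sigma$ to $T$), and the converse likewise runs through $\sigma(\lambda)=1/\lambda\Rightarrow\sigma(j)=j\Rightarrow j\in K$, integrality from the valuations of $\lambda$, $\mu$ and $\lambda^2-\lambda+1$ at primes outside $S_L$, and an application of Proposition~\ref{PropCremonaLingham}. The one step you defer as ``the main technical obstacle''---integrality at primes $\pK\mid 2$ with $\pK\notin S$---is not treated in the paper's proof either (its valuation argument only yields integrality outside $S^{(2)}$), and it is immaterial for the ``moreover'' clause since the conclusion is only good reduction outside $S^{(6)}\supseteq S^{(2)}$; your explicit identities $j=2^8(u-1)^3/(u-2)=-2^8(1-v)^3/v$ are correct but are an addition rather than a different method.
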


\begin{proof}

If $\tau$ is a generator of $\Gal(L/K)$ then we can choose $\lambda\in\Lambda$ from the compatibility 
of the action of $G$ and $\Gal(L/K)$ on the set $\Lambda$ such that $\tau(\lambda)=T\cdot\lambda= 
\frac{1}{\lambda}$. As a result, $$N_{L/K}(\lambda)=1.$$

Conversely, if $N_{L/K}(\lambda)=1$ that means $\tau(\lambda)=\frac{1}{\lambda}$ and then 
$\tau(j)=j$. Since $\lambda,\mu\in\SuL$ we have that $\ord_\bL(\lambda^2-\lambda+1)\geq 0$ for all 
$\bL\not\in S_L$ and then $j\in\SiK$. If $w\in K(S_K,6)_{12}$ then by Proposition 
\ref{PropCremonaLingham} there exists an elliptic curve over $K$ with $j$--invariant equal to $j$ and 
good reduction at all prime not in $S^{(6)}$.
\end{proof}

\subsection{$\Gal(L/K)\simeq C_3$}

In case $\Gal(L/K)\simeq C_3$, the following theorem holds,

\begin{theorem}\label{TheoremC3}
Let $\Gal(L/K)=\langle\sigma\rangle$. Suppose that $\Lambda\subset\SuL$ is associated with an 
elliptic curve defined over $K$. Then there exists $\lambda\in\Lambda$ satisfying the following conditions,
\begin{enumerate}[(i)]
\item $-\lambda\in\SuLKo$.
\item $\sigma(\lambda)=\frac{1}{\mu}$.
\end{enumerate}

Conversely, let $\lambda\in\SuL$ and $j,w,\mu$ as above. If $\mu\in\SuL$ and (i)--(ii) hold then 
$j\in\SiK$. Moreover, if $w\in K(S,6)_{12}$ then $j$ is the $j$--invariant of an elliptic with good 
reduction outside $S^{(6)}$.
\end{theorem}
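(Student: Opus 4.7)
The plan is to follow the template of the $C_2$ case, now exploiting the injection $\Gal(L/K) \simeq C_3 \hookrightarrow G = \langle T, R\rangle$ induced by the action of $\Gal(L/K)$ on the six roots of $F_j(x) \in K[x]$, whose image must be the unique order-3 subgroup $\langle R\rangle$ of $G$. For the forward direction, $\sigma$ acts on $\Lambda$ as some generator of $\langle R\rangle$; since $R$ and $R^2$ are $S_3$-conjugate via $T$, a suitable choice of base point in the $G$-orbit $\Lambda$ yields $\lambda \in \Lambda$ with $\sigma(\lambda) = R\cdot\lambda = 1/(1-\lambda) = 1/\mu$, which is condition (ii). One then computes $\sigma^2(\lambda) = \sigma(1/\mu) = 1/\sigma(\mu) = (\lambda-1)/\lambda$ directly, so
\[
N_{L/K}(\lambda) \;=\; \lambda \cdot \frac{1}{1-\lambda} \cdot \frac{\lambda-1}{\lambda} \;=\; -1,
\]
and hence $N_{L/K}(-\lambda) = (-1)^3 \cdot (-1) = 1$, giving condition (i).

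For the converse, condition (ii) ensures $\sigma$ acts on $\lambda$ as $R$, and because all elements of the $G$-orbit $\Lambda$ yield the same value of $j$, we get $\sigma(j) = j$; since $\sigma$ generates $\Gal(L/K)$, this forces $j \in K$. The $S$-integrality of $j$ then follows exactly as in the $C_2$ case: the formula $j = 2^8(\lambda^2 - \lambda + 1)^3/(\lambda\mu)^2$ together with $\lambda,\mu\in\SuL$ gives $\ord_\bL(j)\geq 0$ for every $\bL \notin S_L$, so $j \in \SiK$. Finally, the additional hypothesis $w \in K(S,6)_{12}$ lets one apply Proposition~\ref{PropCremonaLingham} to produce an elliptic curve over $K$ with this $j$-invariant and good reduction outside $S^{(6)}$.

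The main technical point is justifying the choice of $\lambda \in \Lambda$ in the forward direction so that $\sigma$ acts as $R$ rather than $R^2$; everything else reduces to straightforward Möbius computations with the explicit formulas for $R$ and $R^2$. It is worth remarking that condition (i) is actually a formal consequence of (ii) together with $\lambda \in \SuL$, as the norm calculation above shows; stating it separately is algorithmically important, since $\SuLKo$ has strictly smaller rank than $\SuL$ and so cuts down the search space for solutions of the $S$-unit equation $\lambda + \mu = 1$.
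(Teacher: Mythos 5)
Your argument is correct and follows essentially the same route as the paper: choose $\lambda\in\Lambda$ so that $\sigma$ acts as $R$, giving $\sigma(\lambda)=1/\mu$, deduce $N_{L/K}(-\lambda)=1$ by the norm computation, and run the converse exactly as in the $C_2$ case via $\sigma(j)=j$ and Proposition~\ref{PropCremonaLingham}. You have merely written out the details (the conjugacy argument for $R$ versus $R^2$ and the ``easy calculation'' of the norm) that the paper leaves implicit, and your closing remark that (i) is a formal consequence of (ii) but is retained for rank reduction is consistent with the paper's intent.
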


\begin{proof}
As in the quadratic case we can assume that $\sigma(\lambda)=S\cdot\lambda=\frac{1}{1-\lambda} 
=\frac{1}{\mu}$. Then an easy calculation shows that $N_{L/K}(-\lambda)=1$. The proof of the converse 
is similar to Theorem (\ref{TheoremC2}).
\end{proof}

From the condition $\sigma(\lambda)=\frac{1}{\mu}$ we also see that $N_{L/K}(-\mu)=1$. Actually, one 
can prove that we can replace condition (ii) with the condition $-\mu\in\SuLKo$. As a result, we have 
that $\lambda,\mu\in\SuLKpmo$.

\subsection{$\Gal(L/K)\simeq S_3$}
Finally, for the case $\Gal(L/K)\simeq S_3$ we have similar result,

\begin{theorem}\label{TheoremS3}
Let $\Gal(L/K)=S_3=\langle\sigma,\tau\rangle$ such that $\sigma^3=\tau^2=1$. Suppose that $\Lambda \subset\SuL$ is associated with an elliptic curve defined over $K$. Then there exists $\lambda\in \Lambda$ satisfying the following conditions,
\begin{enumerate}[(i)]
\item $\lambda\in\mathcal O^*_{L,L^\tau,S_L,1}$.
\item $-\lambda\in O^*_{L,L^\sigma,S_L,1}$
\item $\sigma(\lambda)=\frac{1}{\mu}$.
\end{enumerate}

Conversely, let $\lambda\in\SuL$ and $j,w,\mu$ as above. If $\mu\in\SuL$ and (i)--(iii) hold then $j\in\SiK$. Moreover, if $w\in K(S,6)_{12}$ then $j$ is the $j$--invariant of an elliptic with good reduction outside $S^{(6)}$.
\end{theorem}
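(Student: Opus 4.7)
The plan is to reduce the theorem to the two previous cases by treating the quadratic subextension $L/L^\tau$ and the cubic subextension $L/L^\sigma$ separately: condition~(i) plays the role of Theorem~\ref{TheoremC2} for $L/L^\tau$, condition~(ii) plays the role of Theorem~\ref{TheoremC3} for $L/L^\sigma$, and condition~(iii) fixes a representative of $\Lambda$ compatible with both. For the forward direction, the action of $\Gal(L/K)$ on $\Lambda$ is identified with that of $G = \langle T,R\rangle\simeq S_3$, so I may choose $\lambda \in \Lambda$ such that under the Galois isomorphism $\tau$ corresponds to $T$ and $\sigma$ to $R$; concretely this means $\tau(\lambda) = 1/\lambda$ and $\sigma(\lambda) = 1/(1-\lambda) = 1/\mu$, which gives (iii) by construction.

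Now (i) follows immediately from $N_{L/L^\tau}(\lambda) = \lambda\,\tau(\lambda) = 1$. For (ii), starting from $\sigma(\lambda)=1/\mu$ one computes $\sigma^2(\lambda) = (\lambda-1)/\lambda$, and then
$$N_{L/L^\sigma}(-\lambda) = -\lambda\,\bigl(-\sigma(\lambda)\bigr)\bigl(-\sigma^2(\lambda)\bigr) = -\lambda\cdot\frac{1}{1-\lambda}\cdot\frac{\lambda-1}{\lambda} = 1.$$

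For the converse, condition~(i) combined with $[L:L^\tau]=2$ forces $\tau(\lambda) = 1/\lambda$, while (iii) gives $\sigma(\lambda) = 1/\mu$. Substituting each of these in turn into $j = 2^8(\lambda^2-\lambda+1)^3/(\lambda^2(1-\lambda)^2)$ and simplifying shows $\tau(j) = \sigma(j) = j$; since $\Gal(L/K) = \langle\sigma,\tau\rangle$, this forces $j \in K$. The integrality argument is then identical to the one in Theorem~\ref{TheoremC2}: any prime $\bL$ of $L$ outside $S_L$ has $\ord_\bL(\lambda) = \ord_\bL(1-\lambda) = 0$, so $\ord_\bL(\lambda^2-\lambda+1) \geq 0$ and hence $j \in \SiK$. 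Existence of an elliptic curve with $j$--invariant $j$ and good reduction outside $S^{(6)}$ under the extra hypothesis $w \in K(S,6)_{12}$ is then supplied by Proposition~\ref{PropCremonaLingham}.

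The only point requiring care is the mutual compatibility of (i)--(iii): the computation above can be read as showing that (iii) already implies (ii), so that (ii) is logically redundant given (iii) but is retained because it further restricts $\lambda$ to a finitely generated subgroup of smaller rank than $\SuL$, exactly as in the remark following Theorem~\ref{TheoremC3}. Modulo this observation, the proof is genuinely just the amalgamation of the $C_2$ and $C_3$ arguments applied to the two intermediate extensions.
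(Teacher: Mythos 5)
Your proof is correct and follows essentially the same route as the paper's: choose $\lambda\in\Lambda$ so that $\tau$ and $\sigma$ act as $T$ and $R$, read off (iii), verify (i) and (ii) by the norm computations $\lambda\tau(\lambda)=1$ and $(-\lambda)(-\sigma(\lambda))(-\sigma^2(\lambda))=1$, and run the converse exactly as in Theorem~\ref{TheoremC2}. You supply more detail than the paper (which compresses the converse to ``similar to Theorem~\ref{TheoremC2}''), and your observation that (iii) already forces (ii) is a correct bonus in the spirit of the remark following Theorem~\ref{TheoremC3}.
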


\begin{proof}
As in the proofs of Theorems (\ref{TheoremC2}) and (\ref{TheoremC3}) we can assume that 
$\sigma(\lambda)=S\cdot\lambda=\frac{1}{1-\lambda}=\frac{1}{\mu}$ and $\tau(\lambda)=T\cdot\lambda=
\frac{1}{\lambda}$. The last two equations show that $N_{L/L^{\sigma}}(-\lambda)=1$ and 
$N_{L/L^\tau}(\lambda)=1$, respectively. The proof of the converse is similar to theorem 
(\ref{TheoremC2}).
\end{proof}

Since, we have proved that $\lambda$ and $\mu$ are constrained to lie in different subgroups of 
$\SuL$ we denote by $\Gl$ and $\Gm$ the groups where $\lambda$ and $\mu$ lie. If $M_L$ is the set of 
places of $L$ then we define,
\begin{align*}
\Sl&:=\left\lbrace \bL\in M_L:|x|_\bL\neq 1\text{ for some } x\in\Gl\right\rbrace\\
\Sm&:=\left\lbrace \bL\in M_L:|x|_\bL\neq 1\text{ for some } x\in\Gm\right\rbrace
\end{align*}

By the relation $\sigma(\lambda)=\frac{1}{\mu}$ in the $C_3$ and $S_3$ cases, we can make a choice of 
bases of $\Gl$ and $\Gm$ such that $\lambda$ and $\mu$ have the same vector of exponents.

Since, solving $S$--unit equations it is the hardest part of the method, we want to avoid doing more 
computations than necessary. To complete this section we include several results which in practice 
reduce the number of $S$--unit equations which need to be solved.

In the case where the initial set of primes $S$ does not include a prime $\pK$ above 2 the following 
two propositions hold,

\begin{proposition}\label{PropNotPrimeAbove2C2C3}
If $[L:K]=2,3$ and $E$ has good reduction at a prime $\pK\mid 2$ then $\ord_\pK(\Delta(L/K))\equiv 0 
\mod 2$.
\end{proposition}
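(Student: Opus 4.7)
The plan is to handle the two degrees $[L:K]\in\{2,3\}$ by different methods: the cubic case will follow from a tame ramification computation, while the quadratic case requires identifying $L$ as $K(\sqrt d)$ for a suitable $d$ and then doing a local computation at $\pK$.

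For $[L:K]=3$, note that $L/K$ is Galois of degree $3$, so at $\pK\mid 2$ the ramification index $e$ is $1$ or $3$. Since $\gcd(e,2)=1$ in either case, the ramification is tame, and the standard formula for the local discriminant of a tame extension gives $\ord_\pK(\Delta(L/K))=f(e-1)\in\{0,2\}$, which is even. (This part of the argument does not actually use good reduction of $E$; it is purely a fact about cyclic cubic extensions at residue characteristic $2$.)

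For $[L:K]=2$, the group $\Gal(L/K)\simeq C_2$ acts on the three roots of $f_2$ with exactly one fixed root $e_1\in K$, so we may factor $f_2=4(x-e_1)g(x)$ with $g(x)=(x-e_2)(x-e_3)\in K[x]$, and $L=K(\sqrt d)$ where $d=(e_2-e_3)^2$. From the identity $2^4\Delta=2^8\prod_{i<j}(e_i-e_j)^2$ coming from (\ref{discriminants_relation}), together with the observation that $(e_1-e_2)(e_1-e_3)=g(e_1)\in K^*$ is nonzero (since good reduction forces $\Delta\neq 0$), we derive the key congruence
\[
d=(e_2-e_3)^2 \equiv \Delta \pmod{K^{*2}}.
\]
Good reduction at $\pK$ yields a minimal model in which $\Delta$ is a unit at $\pK$, so the square-class of $d$ may be represented by an element with $\ord_\pK(d)=0$.

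It remains to check that a local quadratic extension $K_\pK(\sqrt d)/K_\pK$ with $d$ a local unit has discriminant of even $\pK$-valuation. For this we compute the polynomial discriminant $\mathrm{disc}(x^2-d)=4d$, whose $\pK$-valuation is $2\,\ord_\pK(2)$, an even integer; the local field discriminant $\Delta(L_\pK/K_\pK)$ differs from this by the square of the index $[\mathcal O_{L_\pK}:\mathcal O_{K_\pK}[\sqrt d]]$, which contributes an even valuation, so $\ord_\pK(\Delta(L/K))$ is even. The main technical point, where wild ramification at $\pK\mid 2$ could in principle intrude, is this final local step; the argument succeeds precisely because arranging $d$ to be a $\pK$-unit ensures the polynomial discriminant $4d$ has even $\pK$-valuation.
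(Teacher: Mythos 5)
Your proof is correct, but it takes a genuinely different route from the paper's. The paper gives a single two-line argument covering both degrees at once: since $L$ is the splitting field of $f_2$, the relative discriminant satisfies $k^2\Delta(L/K)=\Delta(2^4f_2)=2^{20}\Delta(f_2)/2^4=2^{20}\Delta$ up to the square $k^2$ (via equation (\ref{discriminants_relation})), and Lemma 3.1 of Cremona--Lingham gives $\ord_\pK(\Delta)\equiv 0\pmod{12}$ at a prime of good reduction, so every factor on the right has even $\pK$-valuation and hence so does $\Delta(L/K)$. You instead split by degree: for $[L:K]=3$ you invoke tameness ($e\in\{1,3\}$ is prime to the residue characteristic $2$) to get discriminant exponent $gf(e-1)\in\{0,2\}$, and for $[L:K]=2$ you identify $L=K(\sqrt d)$ with $d\equiv\Delta\pmod{K^{*2}}$, normalize $d$ to a $\pK$-unit using good reduction, and compute locally with $\operatorname{disc}(x^2-d)=4d$. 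Both arguments are sound (in the quadratic case you should also note the split case, where the contribution is trivially $0$, and in the cubic case the exponent is really $gf(e-1)$ summed over primes above $\pK$, though this agrees with your formula in both subcases). The paper's version buys brevity and uniformity; yours buys two genuine insights the paper leaves implicit --- that the cubic case needs no hypothesis on $E$ at all, being a general fact about Galois cubic extensions at residue characteristic $2$, and that $L=K(\sqrt{\Delta})$ in the quadratic case, an identification the paper itself relies on later (in the discussion following Proposition \ref{PropHilbertSymbol}).
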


\begin{proof}
There exists $k\in K^*$ such that $k^2\Delta(L/K)=\Delta(2^4f_2) =2^{16}\Delta(f_2)$. By Lemma 3.1 in 
\cite{CremonaLingham07} we know that $\ord_\pK(\Delta)\equiv 0\mod 12$. Finally, using equation 
(\ref{discriminants_relation}) we get the result.
\end{proof}

\begin{proposition}\label{PropNotPrimeAbove2S3}
If $[L:K]=6$, $L_c$ a cubic subfield and $E$ has good reduction at a prime $\pK\mid 2$ then 
$\ord_\pK (\Delta(L_c/K))\equiv 0\mod 2$.
\end{proposition}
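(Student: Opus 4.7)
The plan is to exhibit a monic polynomial $g(x)\in\OK[x]$ of degree $3$ generating $L_c$, compute $\Delta(g)$ in closed form, and then invoke the classical identity
$$\Delta(g)=[\mathcal{O}_{L_c}:\OK[\theta]]^2\cdot\Delta(L_c/K)$$
where $\theta$ is a root of $g$ lying in $L_c$. Because the index appears squared, $\ord_\pK(\Delta(g))$ and $\ord_\pK(\Delta(L_c/K))$ have the same parity, so it suffices to show that $\ord_\pK(\Delta(g))$ is even.

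After changing variables so that $E$ is in a globally integral Weierstrass model (hence $a_i,b_i\in\OK$), consider the $2$-division polynomial $f_2(x)=4x^3+b_2x^2+2b_4x+b_6$ with roots $e_1,e_2,e_3$. Since $\Gal(L/K)\simeq S_3$ acts transitively on $\{e_1,e_2,e_3\}$, $f_2$ is irreducible over $K$ and the three cubic subfields of $L$ are the Galois--conjugate fields $K(e_i)$; they share a common relative discriminant ideal, so I may take $L_c=K(e_1)$. The natural integral generator is $\theta=4e_1$, which is a root of the monic polynomial
$$g(x)=x^3+b_2 x^2+8b_4 x+16 b_6\in\OK[x]$$
obtained from $16\cdot f_2(x/4)$; thus $\theta\in\mathcal{O}_{L_c}$ and, since $\deg g=3=[L_c:K]$, $g$ is the minimal polynomial of $\theta$.

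A direct calculation using equation (\ref{discriminants_relation}) and $(4e_i-4e_j)^2=16(e_i-e_j)^2$ gives
$$\Delta(g)=\prod_{i<j}(4e_i-4e_j)^2=2^{12}\prod_{i<j}(e_i-e_j)^2=\frac{2^{12}\,\Delta(f_2)}{2^8}=2^8\,\Delta.$$
By Lemma~$3.1$ of \cite{CremonaLingham07}, already invoked in the proof of Proposition~\ref{PropNotPrimeAbove2C2C3}, good reduction of $E$ at $\pK$ forces $\ord_\pK(\Delta)\equiv 0\pmod{12}$; in particular $\ord_\pK(\Delta)$ is even, so $\ord_\pK(\Delta(g))=8\,\ord_\pK(2)+\ord_\pK(\Delta)$ is even, and hence so is $\ord_\pK(\Delta(L_c/K))$.

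The only items needing verification are the integrality of $g$ and its identification as the minimal polynomial of $\theta$, which are immediate from $\Gal(L/K)\simeq S_3$ and the explicit form of $g$, together with the standard polynomial/field discriminant identity over the Dedekind domain $\OK$, which transfers the parity from $\Delta(g)$ to $\Delta(L_c/K)$. Nothing deeper than the Cremona--Lingham valuation lemma used in the preceding proposition is required.
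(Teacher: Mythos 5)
Your proof is correct and follows essentially the same route as the paper: relate $\Delta(L_c/K)$ up to a square to the discriminant of a defining cubic derived from $f_2$, express that polynomial discriminant as an even power of $2$ times $\Delta$ via equation (\ref{discriminants_relation}), and conclude from $\ord_\pK(\Delta)\equiv 0\bmod 12$ (Lemma 3.1 of Cremona--Lingham). You simply carry out more carefully what the paper compresses into one line, by working with the genuinely monic integral polynomial $16f_2(x/4)$ with root $4e_1$.
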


\begin{proof}
We know that there exists $k\in K^*$ such that $k^2\Delta(L_c/K)=\Delta(2^4f_2)=2^{16}\Delta(f_2)$. 
The rest is as above.
\end{proof}

Because finding isogenous curves is a quite fast procedure, in practice we also assume that we are 
looking for curves up to isogeny in order to reduce the number of $S$--unit equations we have to 
solve. When $E$ has full two torsion we have,

\begin{proposition}\label{PropRibetMazur}
If $E$ has full two torsion then it is isogenous to one without full two torsion.
\end{proposition}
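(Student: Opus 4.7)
The plan is to show directly that, starting from $E$ with full 2-torsion, at least one of the three 2-isogenous curves fails to have full 2-torsion, and to handle the remaining case by iteration.

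Write $E: y^2=(x-e_1)(x-e_2)(x-e_3)$ with $e_1,e_2,e_3\in K$, and let $\phi_i:E\to E_i:=E/\langle(e_i,0)\rangle$ for $i=1,2,3$ be the three 2-isogenies, computed via V\'elu's formulas. After translating $e_i$ to the origin, so that $E$ takes the form $y^2=x(x-a)(x-b)$ with $a=e_j-e_i$, $b=e_k-e_i$, the target has equation $y^2=x^3+2(a+b)x^2+(a-b)^2x$; the quadratic factor $x^2+2(a+b)x+(a-b)^2$ has discriminant $16ab$. Hence $E_i$ has full 2-torsion over $K$ if and only if
$$
\beta_i:=(e_j-e_i)(e_k-e_i)\in K^{*2},\qquad \{i,j,k\}=\{1,2,3\}.
$$

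The crucial observation is the identity
$$
\beta_1\beta_2\beta_3=-\prod_{1\le i<j\le 3}(e_i-e_j)^2,
$$
so the product lies in $-K^{*2}$. If $\sqrt{-1}\notin K$, then $\beta_1\beta_2\beta_3\notin K^{*2}$, so at least one $\beta_i$ is not a square and the corresponding $E_i$ does not have full 2-torsion over $K$; this already gives an isogenous curve of the required type.

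When $\sqrt{-1}\in K$ the product identity no longer forces an obstruction, and iteration is needed: assuming every $E_i$ has full 2-torsion, apply the same analysis to each $E_i$, producing a chain of 2-isogenies among curves all of which have full 2-torsion over $K$. Since $E$ has good reduction outside $S$, every curve appearing in this chain has good reduction outside $S^{(2)}$, and by Shafarevich's theorem (cited in the introduction) the set of $K$-isomorphism classes of such curves is finite, so the chain must eventually cycle. The main obstacle is to rule out the existence of a closed 2-isogeny subgraph all of whose vertices have full 2-torsion over $K$: I would attempt this either by a structural argument using the modular polynomial $\Phi_2$ relating the $j$-invariants along each edge (showing the $\beta_i$-conditions propagate in a way that forces $e_1,e_2,e_3$ to degenerate), or by a Faltings-height/discriminant monotonicity along the chain. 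Completing this second case cleanly is where the real work lies; the $\sqrt{-1}\notin K$ case is essentially immediate from the product identity above.
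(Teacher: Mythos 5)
Your first case is correct and rather elegant: the criterion that $E_i=E/\langle(e_i,0)\rangle$ has full $2$-torsion over $K$ iff $\beta_i=(e_j-e_i)(e_k-e_i)\in K^{*2}$ checks out against V\'elu (and against the explicit $\bar a=-2a$, $\bar b=a^2-4b$ formulas used elsewhere in this paper), and the identity $\beta_1\beta_2\beta_3=-\prod_{i<j}(e_i-e_j)^2$ is right, so for $\sqrt{-1}\notin K$ you really do get a one-step proof. But the second case is a genuine gap, not a deferral of routine detail, and it is not vacuous: over $K=\QQ(i)$ the curve $y^2=x(x-1)(x-\tfrac{9}{25})$ has $\beta_1=(3/5)^2$, $\beta_2=(4/5)^2$ and $\beta_3=-\tfrac{144}{625}=(12i/25)^2$, so all three $2$-isogenous curves again have full $2$-torsion and iteration is unavoidable. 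Neither of your proposed ways of finishing closes it. Shafarevich finiteness only says the $2$-isogeny graph you are exploring is finite, i.e.\ your walk revisits vertices; a closed subgraph all of whose vertices have full $2$-torsion is exactly the configuration you must exclude, and "the chain cycles" is consistent with it, not a contradiction. No monotone quantity is exhibited either: the Faltings height is not monotone along $2$-isogenies (the dual isogeny reverses any step, and $2$-isogeny cycles do occur, e.g.\ for CM curves), and the modular-polynomial idea is a single sentence with no argument behind it. As written, the proposal proves the proposition only for fields not containing $\sqrt{-1}$.

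For what it is worth, the paper does not prove this statement either: its entire proof is a citation to Proposition 2.1 of Ribet's paper, so there is no in-paper argument for your approach to be measured against. Your $\beta_i$-criterion and product identity would be a good opening move in a self-contained proof, but to complete it you would need an actual argument in the $\sqrt{-1}\in K$ case --- for instance an analysis of the Galois-stable lattices in the $2$-adic Tate module along the lines of Ribet's proof --- rather than a list of strategies you might try.
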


\begin{proof}
See Proposition 2.1 of \cite{Ribet76}.
\end{proof}

Proposition \ref{PropRibetMazur} actually says that we do not have to solve $S$--equations in the 
case $L=K$.

Let $E/K$ be an elliptic curve with a rational 2--torsion point. We assume that $E$ is of the 
following form,
$$E:y^2=x(x^2+ax+b)\text{, with }a,b\in\OK$$
Then, the 2--isogenous of $E$ is the curve 
$$\bar E:Y^2=X(X^2+\bar aX+\bar b)$$
where $\bar a=-2a$ and $\bar b=a^2-4b$. An easy calculation shows that $\Delta(E)=2^4b^2(a^2-4b)$ and 
$\Delta(\bar E)=2^4\bar b^2(\bar a^2-4\bar b)=2^8b(a^2-4b)^2$.

\begin{proposition}\label{PropHilbertSymbol}
Let $E,\bar E$ be as above. If $\left(\frac{\cdot,\cdot}{K}\right)$ is the Hilbert  symbol relative 
to $K$ then we have,
$$\left(\frac{\Delta(E),\Delta(\bar E)}{K}\right)=1.$$
\end{proposition}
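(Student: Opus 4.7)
The plan is to reduce the computation of the Hilbert symbol to a small, concrete quadratic form and then exhibit an explicit nontrivial zero of that form over $K$.

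First I would simplify using the bilinearity of the Hilbert symbol together with the identity $(x, y^2)_K = 1$. Since $E$ is an elliptic curve we have $b \neq 0$ and $a^2-4b \neq 0$, so the factor $2^4b^2$ in $\Delta(E)=2^4b^2(a^2-4b)$ and the factor $2^8(a^2-4b)^2$ in $\Delta(\bar E)=2^8b(a^2-4b)^2$ are both nonzero squares in $K^*$. Therefore modulo $(K^*)^2$ we have $\Delta(E)\equiv a^2-4b$ and $\Delta(\bar E)\equiv b$, and consequently
\[
\left(\frac{\Delta(E),\Delta(\bar E)}{K}\right)=\left(\frac{a^2-4b,\,b}{K}\right).
\]

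Next I would show the right-hand symbol is trivial by producing a nontrivial representation of $0$ by the ternary quadratic form $z^2-(a^2-4b)x^2-by^2$ over $K$. The key observation is the identity
\[
a^2 - (a^2-4b)\cdot 1^2 - b\cdot 2^2 = a^2 - a^2 + 4b - 4b = 0,
\]
so $(z,x,y)=(a,1,2)$ is a nontrivial zero (nontrivial because $x=1\neq 0$). This proves $(a^2-4b,b)_K=1$ by the standard characterization of the Hilbert symbol in terms of representability of norms by binary quadratic forms.

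There is really no obstacle here once the symbol is reduced modulo squares; the only thing to watch is that the simplification requires $b$ and $a^2-4b$ to be nonzero, which is ensured by the assumption that $E$ is an elliptic curve (the discriminant $\Delta(E)=2^4b^2(a^2-4b)$ is nonzero). As a cross-check, one could equivalently invoke the Steinberg relation: from $\tfrac{a^2-4b}{a^2}+\tfrac{4b}{a^2}=1$ (valid when $a\neq 0$) one obtains $(a^2-4b,\,4b)_K=1$, hence $(a^2-4b,b)_K=1$; the case $a=0$ then follows separately using $(-b,b)_K=1$. The quadratic-form argument above has the advantage of handling both cases uniformly.
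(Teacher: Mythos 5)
Your proof is correct and follows essentially the same route as the paper: the paper's one-line proof exhibits the nontrivial solution $(1,2,a)$ of the ternary form attached to the symbol, which is exactly your identity $(a^2-4b)\cdot 1^2+b\cdot 2^2=a^2$. You merely make explicit the reduction of $\Delta(E)$ and $\Delta(\bar E)$ modulo squares, which the paper leaves implicit.
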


\begin{proof}
The equation $\Delta(E)x^2+\Delta(\bar E)y^2=z^2$ has the non-trivial solution $(1,2,a)$ in $\OK^3$.
\end{proof}


The previous two propositions help us to reduce the number of $S$--unit equations we have to solve. 
If $L$ is the 2--division field of an elliptic curve with only one rational point of order 2 then 
$\Delta(E)\equiv d\mod K^{*2}$ where $L=K(\sqrt{d})$ for $d\in K(S_K,2)$ by Proposition 
\ref{PropositionUnramifiedC2}. We find the smallest set $D$ of $d$'s such that at least one of the 
$d_1,d_2$ belongs in $D$ when $\left(\frac{d_1,d_2}{K}\right)=1$ for all possible 
combinations of $d_1,d_2$(including the case $d_1=d_2$). Then we have to solve the $S$--unit equation 
(\ref{SunitEquation}) only for the cases when $d\in D$ and then find all the isogenous curves.

\section{Efficient Sieve}

As we briefly explained in the introduction, the generalized $S$--unit algorithm to find all 
solutions of equation (\ref{SunitEquation}) where $\lambda,\mu$ lie in (possibly different) finitely 
generated subgroups of $L^*$ for some number field $L$ with generators $\lambda_0,\lambda_1,\cdots,
\lambda_n$ and $\mu_0,\mu_1,\cdots,\mu_k$ has three main steps:
\begin{enumerate}[(1)]
\item Use bounds of linear forms and $p$--adic logarithms to obtain bounds for the exponent vectors 
of $\lambda,\mu$ for any solutions.

\item Use lattice basis reduction algorithms and LLL--reduction to reduce these bounds as much as 
possible.

\item Use a sieve method to discard many candidate $\lambda,\mu$ which do not give solutions.
\end{enumerate}

General sieve methods have been suggested by Smart and Wildanger(\cite{Smart99}, \cite{Wildanger00}). 
However, we benefit from Theorems \ref{TheoremC2}, \ref{TheoremC3} and \ref{TheoremS3} and the 
symmetries they introduce. We modify Smart and Wildanger's ideas avoiding the use of Finke--Pohst 
algorithm (\cite{FinckePohst85}) in any point of the sieve as they suggest. By contrast with Smart 
and Wildanger we allow each generator to have its own upper bound for the absolute value of its 
exponent. Also it is important to mention that a choice of basis for $\Gl$ and $\Gm$ such that some 
of the generators are units(if it is possible) is crucial.

For the rest of this section we fix bases for $\Gl$ and $\Gm$ and we express $\lambda$ and $\mu$ as 
a multiplicative combination of the bases,
\begin{align*}
\lambda&=\prod_{i=0}^n\lambda_i^{x_i} & \mu&=\prod_{i=0}^m\mu_i^{y_i}
\end{align*}
We always assume that $\lambda_0$ and $\mu_0$ are the generators of the torsion part of $\Gl$ and 
$\Gm$, respectively. We also assume that from steps (1) and (2) of the algorithm we have two vectors 
$B_0=(b_0^0,b_1^0,\cdots,b_n^0)$ and $C_0=(c_0^0,c_1^0,\cdots,c_m^0)$ such that for every solution, 
$|x_i|\leq b_i$ and $|y_j|\leq c_j$ for all $i=0,\cdots,n$ and $j=0,\dots, m$. 

For a fixed subset $I$ of $\{0,1,\cdots,n\}$ we define 
\begin{align*}
S_I&=\left\lbrace\bL\in\Sl:\exists\lambda_i \text{ with }i\in I\text{ such that }|\lambda_i|_\bL\neq 
1 \right\rbrace.\\
\lI&=\prod_{i\in I}\lambda_i^{x_i}.
\end{align*}
We denote by $I^\infty=\{i\in\{0,1,\cdots,n\}:\lambda_i\text{ is a unit}\}$ and $\linf=\lambda_{I^ 
\infty}$. Similarly, we define $S_J$ for $J$ a fixed subset of $\{0,1,\cdots,m\}$, $J^\infty$ and 
$\minf$. Since we use Theorems \ref{TheoremC2}, \ref{TheoremC3} and \ref{TheoremS3}, the sieve 
depends on $\Gal(L/K)$. 

\paragraph*{$C_2$ case}

Let $\tau$ be the generator of $\Gal(L/K)$. By the first part of Theorem \ref{TheoremC2} we 
understand that $\Gm=\SuL$, and $\Sl$ contains only split primes $\bL$ such that 
$\ord_\bL(g)=-\ord_{\tau(\bL)} (g)$ for all $g\in\Gl$. For each one of the conjugate finite primes 
$\bL\in\Sl$ we prove that there are no pairs of solutions $(\lambda,\mu)$ for which $|\ord_\bL 
(\lambda)|$ is `large'. We do this by showing that,
$$|\mu-1|_\bL <\delta\ll 1$$
has no non--trivial solutions, using Lemma 4 in \cite{Smart99}. We use the new upper bounds on 
$|\ord_\bL(\lambda)|$ obtained in this way to get new vectors $B_1=(b_0^1,b_1^1,\cdots,b_n^1)$ and 
$C_1=(c_0^1,c_1^1,\cdots,c_m^1)$. 

Since we have chosen only finite primes $\bL\in\Sl$ we have not reduced the exponents bounds $b_i^1$ 
and $c_j^1$ for the unit generators. The way we reduce the bounds for the unit generators is to split 
the set of solutions in two sets, where the first set contains solutions with smaller exponents, and 
try to show that the second set contains no solutions. In order to do that we need a few definitions. 
Let $\Slinf:=S_{I^\infty}$ then\footnote{Note that $\bL\in\Slinf\Leftrightarrow\tau(\bL)\in\Slinf$.}, 

\begin{definition}
Let  $B=(b_0,b_1,\cdots,b_n)\in\NN^{n+1}$ and $C=(c_0,c_1,\cdots,c_n)\in\NN^{m+1}$. Then for $R>1$ we 
define,
$$\mathcal L^2_\infty(B,C,R)=\left\lbrace(\lambda,\mu):|x_i|\leq b_i,|y_i|\leq c_i\text{ and }|\log|
\lambda|_\bL|\leq\log(R),\forall\bL\in\Slinf\right\rbrace.$$
\end{definition}

Let 
$$R_{1,\infty}:=\max_{\bL\in\Slinf}\exp\left(\sum_{i=1}^nb_i^1|\log|\lambda_i|_\bL|\right).$$

\begin{lemma}\label{LemmaInitialLC2}
Every pair of solutions $(\lambda,\mu)$ lies in $\mathcal L^2_\infty(B_1,C_1,R_{1,\infty})$.
\end{lemma}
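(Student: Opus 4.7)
The plan is to verify the three defining conditions of $\mathcal L^2_\infty(B_1,C_1,R_{1,\infty})$ in turn. The first two conditions, $|x_i|\leq b_i^1$ for $0\leq i\leq n$ and $|y_j|\leq c_j^1$ for $0\leq j\leq m$, will be essentially tautological: the vectors $B_1$ and $C_1$ were defined above to be precisely the reduced exponent bounds coming out of the finite-prime sieve step just described, so they are valid for every pair of solutions $(\lambda,\mu)$ by construction, and nothing new needs to be proved about them.

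The substantive assertion is therefore the archimedean estimate $|\log|\lambda|_\bL|\leq\log(R_{1,\infty})$ for every $\bL\in\Slinf$. The first thing I would observe is that $\Slinf=S_{I^\infty}$ can only contain archimedean places, because the generators indexed by $I^\infty$ are units and hence satisfy $|\lambda_i|_\bL=1$ at every finite prime $\bL$. Given this, I would write $\lambda=\prod_{i=0}^{n}\lambda_i^{x_i}$, take the logarithm of its $\bL$-absolute value, and apply the triangle inequality to obtain
$$\bigl|\log|\lambda|_\bL\bigr|\;\leq\;\sum_{i=0}^{n}|x_i|\cdot\bigl|\log|\lambda_i|_\bL\bigr|.$$
Since $\lambda_0$ is a root of unity and $\bL$ is archimedean, $\log|\lambda_0|_\bL=0$, so the $i=0$ term drops out. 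Inserting the bounds $|x_i|\leq b_i^1$ for $1\leq i\leq n$ and then taking the maximum over $\bL\in\Slinf$ recovers exactly $\log(R_{1,\infty})$, completing the verification.

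There is essentially no hard step here: the lemma simply packages the already-obtained exponent bounds $B_1,C_1$ together with a crude triangle-inequality estimate on $\log|\lambda|_\bL$ at archimedean places into the region $\mathcal L^2_\infty$, which the archimedean phase of the sieve in the rest of the section will then cut down further. The only point that requires genuine (if brief) verification is the observation that $\Slinf$ is purely archimedean, since this is what allows the torsion generator $\lambda_0$ to be dropped from the sum defining $R_{1,\infty}$ and makes the estimate above match the formula in the definition.
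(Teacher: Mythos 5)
Your proof is correct and follows essentially the same route as the paper's: expand $\log|\lambda|_\bL$ via the triangle inequality, insert the exponent bounds $|x_i|\leq b_i^1$, and take the maximum over $\bL\in\Slinf$ to recover $\log(R_{1,\infty})$. The extra remarks (that the $B_1,C_1$ conditions hold by construction, that $\Slinf$ is archimedean, and that the torsion generator contributes nothing) are correct and merely make explicit what the paper's one-line chain of inequalities leaves implicit.
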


\begin{proof}
For a $\bL\in\Slinf$ we have,
\begin{align*}
|\log|\lambda|_\bL|&\leq\sum_{i=1}^nx_i|\log|\lambda_i|_\bL|\leq\sum_{i=1}^nb_i^1|\log|\lambda_i|_ 
\bL|\\
\leq&\max_{\bL\in\Slinf}\sum_{i=1}^nb_i^1|\log|\lambda_i|_\bL|=\log(R_{1,\infty}).
\end{align*}
\end{proof}

\begin{definition}
Let $B$ and $C$ be as above then for $\bL\in\Slinf$ and $1< R^\prime<R$ we define,
$$T^2_\bL(B,C,R,R^\prime)=\biggl\{ (\lambda,\mu)\in\mathcal L^2_\infty(B,C,R):\left.
\begin{array}{c}
|\mu-1|_\bL<\frac{1}{R^\prime}\text{ or }\\
|\frac{\mu}{\lambda}-1|_\bL <\frac{1}{R^\prime}
\end{array}\right\rbrace.$$
\end{definition}

We need the following lemma,

\begin{lemma}\label{Lemmac1}
There is a computable constant $\cinf >0$ such that 
$$x_i\leq\cinf\max_{\bL\in\Slinf}(|\log|\linf|_\bL|)$$
for all $i\in I^\infty$.
\end{lemma}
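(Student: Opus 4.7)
The plan is to linearize the claim via the archimedean logarithmic embedding and then invert. Since each $\lambda_i$ with $i\in I^\infty$ lies in $\OL^*$, we have $|\lambda_i|_\bL=1$ at every finite place, so $\Slinf$ consists entirely of archimedean places. Setting aside the torsion index $i=0$, for which $x_0$ is trivially bounded by $|\Gl_{\mathrm{tors}}|$, I would consider the real matrix
$$A := \bigl(\log|\lambda_i|_\bL\bigr)_{\bL\in\Slinf,\;i\in I^\infty\setminus\{0\}}.$$
Because $|\lambda_0|_\bL=1$ at every archimedean place, the factorization $\linf=\lambda_0^{x_0}\prod_{i\in I^\infty\setminus\{0\}}\lambda_i^{x_i}$ yields the linear relation
$$A\,(x_i)_{i\in I^\infty\setminus\{0\}}=\bigl(\log|\linf|_\bL\bigr)_{\bL\in\Slinf}.$$

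The key step is to show that $A$ has full column rank. The $\lambda_i$ with $i\in I^\infty\setminus\{0\}$ form part of a chosen basis of $\Gl$, hence are $\mathbb{Z}$-independent modulo torsion in $\OL^*$. By the very definition of $\Slinf$, every archimedean place $\bL\notin\Slinf$ satisfies $|\lambda_i|_\bL=1$ for all such $i$; therefore the Dirichlet logarithmic map into $\mathbb{R}^{r_1+r_2}$, when restricted to $\langle\lambda_i:i\in I^\infty\setminus\{0\}\rangle$, is supported on the coordinates indexed by $\Slinf$ and so the restriction to $\Slinf$ loses no information. Since the image of $\OL^*$ under the full Dirichlet embedding is a discrete lattice, any $\mathbb{Z}$-independent subset is automatically $\mathbb{R}$-linearly independent (otherwise the $\mathbb{Z}$-span would be a non-discrete subgroup of its lower-dimensional $\mathbb{R}$-span). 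Hence the columns of $A$ are $\mathbb{R}$-linearly independent and $A$ admits a computable left inverse $A^+$, for instance $(A^t A)^{-1}A^t$, computed from floating-point approximations of the entries $\log|\lambda_i|_\bL$ to any desired precision.

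Reading $(x_i)_i = A^+\,\bigl(\log|\linf|_\bL\bigr)_\bL$ coordinate-wise then gives
$$|x_i|\;\leq\;\Bigl(\sum_{\bL\in\Slinf}|A^+_{i,\bL}|\Bigr)\,\max_{\bL\in\Slinf}\bigl|\log|\linf|_\bL\bigr|,$$
so taking $\cinf$ to be the largest such row-sum (enlarged if necessary to dominate $|\Gl_{\mathrm{tors}}|$, thereby absorbing the trivial bound on the torsion exponent $x_0$) establishes the lemma. The only non-routine obstacle in the plan is the full column rank of $A$; this is essentially Dirichlet's unit theorem, and requires only verifying that restricting from all archimedean places down to $\Slinf$ preserves injectivity on the subgroup of interest, which holds exactly because of how $\Slinf$ was defined.
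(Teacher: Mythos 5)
Your proposal is correct and follows essentially the same route as the paper: both arguments reduce the claim to inverting the matrix $\bigl(\log|\lambda_i|_\bL\bigr)_{\bL\in\Slinf,\,i\in I^\infty}$, the paper by extracting an invertible square submatrix $M_t$ and taking $\cinf=\Vert M_t^{-1}\Vert_\infty$, you by a left pseudo-inverse with a row-sum norm bound. Your version is in fact slightly more complete, since you justify the full column rank via discreteness of the Dirichlet logarithmic lattice and you treat the torsion index $i=0$ explicitly, both of which the paper leaves implicit.
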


\begin{proof}
We may assume that $I^\infty=\{1,2,\cdots,t\}$ and $\Slinf=\{\bL_1,\cdots,\bL_u\}$ with $t\leq u$. We define the matrix 
$$M=
\begin{pmatrix}
\log|\lambda_1|_{\bL_1} & \cdots & \log|\lambda_t|_{\bL_1}\\
\vdots & & \vdots\\
\log|\lambda_1|_{\bL_u} & \cdots & \log|\lambda_t|_{\bL_u}
\end{pmatrix}
$$
By the choice of $I^\infty$ and $\Slinf$ there exists a $t\times t$ submatrix of $M$ which 
is invertible. Among all these submatrices we pick one, which we call $M_t$, whose inverse has 
the maximal infinity norm. Define $\cinf=\Vert M_t^{-1}\Vert_\infty$. Then we can deduce that 
$|x_i|\leq\cinf\max_{\bL\in\Slinf}(|\log|\linf|_\bL|)$.
\end{proof}

\begin{proposition}\label{PropSplitSolutionsC2}
Let $1< R_{k+1}<R_k$, and let $B_k$ and $C_k$ be vectors of the exponent bounds such that every solution $(\lambda,\mu)$ lies in $\mathcal L^2_\infty(B_k,C_k,R_k)$. Then,
\begin{align*}
\mathcal L^2_\infty(B_k,C_k,R_k)=&\mathcal L^2_\infty(B_{k+1},C_{k+1},R_{k+1})\bigcup\\
&\bigcup_{\bL\in\Slinf}T^2_\bL(B_k,C_k,R_k,R_{k+1})
\end{align*}
where $C_{k+1}=C_k$, $b_i^{k+1}=\min(b_i^k,\cinf\log (R_{k+1})+\cinf c_{2,\infty})$ for $i\in 
I^\infty$, otherwise $b_i^{k+1}=b_i^k$ and $c_{2,\infty}=\max_{\bL\in\Slinf}\sum_{i\not\in I^\infty} 
b_i^k|\log|\lambda_i|_\bL|$.
\end{proposition}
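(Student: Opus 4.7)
The plan is to split any solution $(\lambda,\mu)\in\mathcal L^2_\infty(B_k,C_k,R_k)$ into two cases, depending on whether the tightened archimedean inequality $|\log|\lambda|_\bL|\leq\log(R_{k+1})$ holds at \emph{every} $\bL\in\Slinf$ or fails at some such $\bL$. In the first case I will upgrade the unit-generator exponent bounds to the new constants $b_i^{k+1}$ via Lemma \ref{Lemmac1}, placing $(\lambda,\mu)$ in $\mathcal L^2_\infty(B_{k+1},C_{k+1},R_{k+1})$. In the second case I will exploit $\lambda+\mu=1$ to produce one of the two defining inequalities of $T^2_\bL$.

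For the ``shrinking'' case, I only need to verify the new bound on $|x_i|$ for $i\in I^\infty$, since the other components of $B_{k+1},C_{k+1}$ coincide with those of $B_k,C_k$. Writing $\lambda=\linf\cdot\prod_{i\notin I^\infty}\lambda_i^{x_i}$ and evaluating $\log|\cdot|_\bL$, I would use the triangle inequality together with the \emph{old} bounds $|x_i|\leq b_i^k$ for $i\notin I^\infty$ to estimate
$$|\log|\linf|_\bL|\leq|\log|\lambda|_\bL|+\sum_{i\notin I^\infty}|x_i|\,|\log|\lambda_i|_\bL|\leq\log(R_{k+1})+c_{2,\infty}.$$
Maximising over $\bL\in\Slinf$ and invoking Lemma \ref{Lemmac1} then yields $|x_i|\leq\cinf\log(R_{k+1})+\cinf c_{2,\infty}$, which combined with the trivial bound $|x_i|\leq b_i^k$ is precisely the definition of $b_i^{k+1}$; hence $(\lambda,\mu)$ indeed lies in $\mathcal L^2_\infty(B_{k+1},C_{k+1},R_{k+1})$.

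For the ``trapping'' case, suppose $|\log|\lambda|_\bL|>\log(R_{k+1})$ at some $\bL\in\Slinf$. Then either $|\lambda|_\bL<1/R_{k+1}$ or $|\lambda|_\bL>R_{k+1}$. In the first subcase, the identity $\mu-1=-\lambda$ immediately gives $|\mu-1|_\bL=|\lambda|_\bL<1/R_{k+1}$. In the second subcase, dividing $\lambda+\mu=1$ by $\lambda$ and rearranging yields $|\mu/\lambda\pm 1|_\bL=1/|\lambda|_\bL<1/R_{k+1}$, where the sign is dictated by the convention used in the definition of $T^2_\bL$. Either way $(\lambda,\mu)\in T^2_\bL(B_k,C_k,R_k,R_{k+1})$, which closes the decomposition.

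The main obstacle is essentially bookkeeping: one has to be careful about which bound each quantity is measured against. The subtle point is that in the shrinking case, the constant $c_{2,\infty}$ must be computed with respect to the \emph{previous} exponent bounds $b_i^k$ so that Lemma \ref{Lemmac1} can be invoked to derive the new bound $b_i^{k+1}$; attempting to use the new bounds prematurely would be circular. Everything else follows at once from $\lambda+\mu=1$ and the triangle inequality.
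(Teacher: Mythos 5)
Your proof is correct and follows essentially the same approach as the paper's: decompose according to whether the tightened bound $|\log|\lambda|_\bL|\leq\log(R_{k+1})$ holds at every $\bL\in\Slinf$, trap the failures in some $T^2_\bL$ via $\mu-1=-\lambda$ and $\frac{\mu}{\lambda}+1=\frac{1}{\lambda}$, and shrink the unit-generator exponents with Lemma \ref{Lemmac1} using the old bounds in $c_{2,\infty}$. The only (cosmetic) difference is that in the subcase $|\lambda|_\bL>R_{k+1}$ the paper passes to the conjugate prime $\tau(\bL)$ using $\tau(\lambda)=1/\lambda$, whereas you argue directly at $\bL$; both land in the union since $\Slinf$ is $\tau$-stable, and you correctly flag the sign convention in the second defining inequality of $T^2_\bL$.
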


\begin{proof}
Let $(\lambda,\mu)\in\mathcal L^2_\infty(B_k,C_k,R_k)$ but $(\lambda,\mu)\not\in\mathcal L^2_\infty 
(B_k,C_k,R_{k+1})$. That means there exists $\bL\in\Slinf$ such that $|\lambda|_\bL>R_{k+1}$ or 
$|\lambda|_\bL<\frac{1}{R_{k+1}}$. In the first case we get that,
\begin{align*}
|\lambda|_\bL&>R_{k+1}\Leftrightarrow |\tau(\lambda)|_{\tau(\bL)}>R_{k+1}\Leftrightarrow\\
|\frac{1}{\lambda}|_{\tau(\bL)}&<\frac{1}{R_{k+1}}\Leftrightarrow|\frac{\mu}{\lambda}-1|_{\tau(\bL)} 
<R_{k+1}.
\end{align*}
In the second case we get $\mid\lambda\mid_\bL<\frac{1}{R_{k+1}}\Leftrightarrow\mid\mu-1\mid_\bL< 
\frac{1}{R_{k+1}}$. Finally, $(\lambda,\mu)\in T^2_\bL(B_k,C_k,R_k,R_{k+1})$ or $T^2_{\tau(\bL)}
(B_k,C_k,R_k,R_{k+1})$.

Now for $(\lambda,\mu)\in\mathcal L^2_\infty(B_k,C_k,R_{k+1})$ we have that 
$$|\log|\linf|_\bL|\leq |\log|\lambda|_\bL|+|\log|\frac{\lambda}{\linf}|_\bL|<\log(R_{k+1})+c_{2,
\infty}$$
and by Lemma (\ref{Lemmac1}) we get,
$$x_i\leq\cinf\log (R_{k+1})+\cinf c_{2,\infty}.$$
So, we deduce that $(\lambda,\mu)\in\mathcal L^2_\infty(B_{k+1},C_{k+1},R_{k+1})$. 
\end{proof}

Proposition \ref{PropSplitSolutionsC2} is very useful in practice because we can quickly prove when 
the set $T^2_\bL(B_k,C_k,R_k, R_{k+1})$ has non--trivial solutions or not. In paragraph 3.1 Lemma 3 
of \cite{Smart99} Smart shows how to do that. The only difference we have introduced, which does not 
change the construction, is that we allow different upper bounds for the exponents while they have 
for all the same bound. We leave to the reader to see how the proof of Lemma 3 in \cite{Smart99} 
adapts to our case.

\paragraph*{$C_3$ case}

Let $\sigma$ be the generator of $\Gal(L/K)$. We recall that we have chosen bases of $\Gl$ and $\Gm$ 
such that $n=m$ and $x_i=y_i$ for all $i=0,\cdots, n$ according to Theorem \ref{TheoremC3} by 
choosing $\mu_i=\sigma(\frac{1}{\lambda_i})$. So, we have to consider only one bound vector $B_k$ at each step of the sieve. Also, we want to recall that $\Gl=\Gm$ and as a result $\Sl=\Sm$. By Theorem 
\ref{TheoremC3} we see that $\Sl$ contains only split finite primes.

Again, as in the quadratic case, the first step is to find an upper bound on $|\ord_\bL(\lambda)|$ for each $\bL\in\Sl$, by proving that, 
$$|\mu-1|_\bL <\delta\ll 1$$
has no non--trivial solutions. We use the new upper bounds of $|\ord_\bL(\lambda)|$ to find a new 
bound vector $B_1$. 

Now, we want to reduce the bound for the unit generators. We observe that $I^\infty=J^\infty$.

\begin{definition}
Let $B=(b_0,b_1,\cdots,b_n)\in\NN^{n+1}$. Then for $R>1$ we define,
$$\mathcal L^3_\infty(B,R)=\left\lbrace(\lambda,\mu):|x_i|\leq b_i`\text{ and }|\log|\lambda|_\bL|
\leq \log(R),\forall\bL\in\Slinf\right\rbrace.$$
\end{definition}


\begin{lemma}
Every pair of solutions $(\lambda,\mu)$ lies in $\mathcal L^3_\infty(B_1,R_{1,\infty})$ with $R_{1,
\infty}$ as above.
\end{lemma}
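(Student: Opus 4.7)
The plan is to proceed exactly as in the proof of Lemma \ref{LemmaInitialLC2}, since the definition of $\mathcal L^3_\infty(B,R)$ is structurally identical to $\mathcal L^2_\infty(B,C,R)$; the only simplification in the $C_3$ setting is that we have chosen bases of $\Gl$ and $\Gm$ so that $\mu$ has the same exponent vector as $\lambda$, hence a single bound vector $B_k$ suffices. First I would unfold the definition of $\mathcal L^3_\infty(B_1,R_{1,\infty})$: for each solution $(\lambda,\mu)$ we must verify two things, namely that $|x_i|\leq b_i^1$ for all $i$, and that $|\log|\lambda|_\bL|\leq \log(R_{1,\infty})$ for every $\bL\in\Slinf$.

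The first bound holds directly by construction: $B_1$ is the vector of exponent bounds produced from $B_0$ by the preceding sieve step on the split finite primes in $\Sl$. For the second bound, I would expand $\lambda=\prod_{i=0}^n\lambda_i^{x_i}$, take $|\cdot|_\bL$ and then logarithms. Since $\lambda_0$ is a root of unity it has absolute value $1$ at every infinite place, and since every $\lambda_i$ with $i\in I^\infty$ is a unit (so $|\lambda_i|_\bL=1$ at all finite primes), the set $\Slinf$ consists only of archimedean places of $L$. Consequently the $i=0$ term contributes nothing, and the triangle inequality gives
\[
|\log|\lambda|_\bL| \;\leq\; \sum_{i=1}^n |x_i|\,\bigl|\log|\lambda_i|_\bL\bigr| \;\leq\; \sum_{i=1}^n b_i^1\,\bigl|\log|\lambda_i|_\bL\bigr|
\]
for every $\bL\in\Slinf$. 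Taking the maximum over $\bL\in\Slinf$ recovers exactly $\log(R_{1,\infty})$ by the definition of $R_{1,\infty}$, which finishes the argument.

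There is no real obstacle here: this is a routine transcription of Lemma \ref{LemmaInitialLC2} into the notationally simpler $C_3$ situation, and the only thing worth double-checking is the implicit convention that $\Slinf$ contains only archimedean places, which follows immediately from $I^\infty=\{i:\lambda_i\text{ is a unit}\}$ and the fact that units have trivial absolute value at every finite prime.
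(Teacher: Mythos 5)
Your proof is correct and follows essentially the same route as the paper, which simply observes that the argument of Lemma \ref{LemmaInitialLC2} carries over verbatim: expand $\lambda$ in the chosen basis, apply the triangle inequality to $|\log|\lambda|_\bL|$ for $\bL\in\Slinf$, and bound by the definition of $R_{1,\infty}$. Your additional remarks (that $\Slinf$ consists of archimedean places and that the torsion generator contributes nothing) are accurate and only make explicit what the paper leaves implicit.
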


\begin{proof}
Similar to the quadratic case, Lemma \ref{LemmaInitialLC2}.
\end{proof}

\begin{definition} 
Let $B$ be as above, then for each $\bL\in\Slinf$ and $1< R^\prime<R$ we define,
$$T^3_\bL(B,R,R^\prime)=\biggl\{ (\lambda,\mu)\in\mathcal L^3_\infty(B,R):\left.
\begin{array}{c}
|\mu-1|_\bL<\frac{1}{R^\prime}\text{ or }\\
|\lambda-1|_{\sigma(\bL)} <\frac{1}{R^\prime}
\end{array}\right\rbrace.$$
\end{definition}

If $\cinf$ is as in Lemma \ref{Lemmac1} then, we have:

\begin{proposition}\label{PropSplitSolutionsC3}
Let $1< R_{k+1}<R_k$ and $B_k$ be bound vector of the exponents. Then,
$$\mathcal L^3_\infty(B_k,R_k)=\mathcal L^3_\infty(B_{k+1},R_{k+1})\cup\bigcup_{\bL\in\Slinf}
T^3_\bL(B_k,R_k,R_{k+1})$$
where $b_i^{k+1}=\min(b_i^k,\cinf\log (R_{k+1})+\cinf c_{2,\infty})$ for $i\in 
I^\infty$ otherwise $b_i^{k+1}=b_i^k$ and $c_{2,\infty}=\max_{\bL\in\Slinf}\sum_{i\not\in I^\infty} 
b_i^k |\log|\lambda_i|_\bL|$.
\end{proposition}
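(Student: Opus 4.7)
The plan is to imitate the proof of Proposition \ref{PropSplitSolutionsC2} with the simplification that in the $C_3$ setting we have $\Gl=\Gm$ and a single exponent vector to control. Take an arbitrary $(\lambda,\mu)\in\mathcal L^3_\infty(B_k,R_k)$ and split into two cases depending on whether or not the tighter archimedean bound $R_{k+1}$ is already satisfied at every place in $\Slinf$. The key algebraic ingredient, playing the role of the reflection $\mu/\lambda\mapsto 1/\lambda$ used in the $C_2$ case, is the relation $\sigma(\lambda)=1/\mu$ furnished by Theorem \ref{TheoremC3}; it lets us trade a large $|\lambda|_\bL$ at one place for a small $|\sigma^{-1}(\lambda)-1|_\bL$ at the same place, which is exactly the quantity $|\lambda-1|_{\sigma(\bL)}$ appearing in the definition of $T^3_\bL$.

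In the first case suppose $|\log|\lambda|_\bL|\leq\log(R_{k+1})$ for every $\bL\in\Slinf$; then $(\lambda,\mu)\in\mathcal L^3_\infty(B_k,R_{k+1})$, and I need to check the sharper exponent bounds $b_i^{k+1}$. For $i\notin I^\infty$ there is nothing to prove. For $i\in I^\infty$, decompose $\lambda=\linf\cdot\prod_{i\notin I^\infty}\lambda_i^{x_i}$, so that at any $\bL\in\Slinf$
\[
|\log|\linf|_\bL|\leq |\log|\lambda|_\bL|+\sum_{i\notin I^\infty}b_i^k|\log|\lambda_i|_\bL|\leq\log(R_{k+1})+c_{2,\infty};
\]
Lemma \ref{Lemmac1} then gives $|x_i|\leq\cinf\log(R_{k+1})+\cinf c_{2,\infty}$, as required.

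In the second case there exists $\bL\in\Slinf$ with $|\lambda|_\bL>R_{k+1}$ or $|\lambda|_\bL<1/R_{k+1}$. If $|\lambda|_\bL<1/R_{k+1}$, then $|\mu-1|_\bL=|-\lambda|_\bL<1/R_{k+1}$, placing $(\lambda,\mu)$ in $T^3_\bL$ via the first clause. If instead $|\lambda|_\bL>R_{k+1}$, I use the Galois action on places (in the convention $|x|_{\sigma(\bL)}=|\sigma^{-1}(x)|_\bL$) together with $\sigma^{-1}(\lambda)=\sigma^2(\lambda)=-\mu/\lambda$ to compute
\[
|\lambda-1|_{\sigma(\bL)}=|\sigma^{-1}(\lambda)-1|_\bL=\Bigl|-\tfrac{\mu+\lambda}{\lambda}\Bigr|_\bL=\tfrac{1}{|\lambda|_\bL}<\tfrac{1}{R_{k+1}},
\]
so $(\lambda,\mu)\in T^3_\bL$ via the second clause. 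Combining the two cases gives the claimed union.

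The one delicate point, and the place I would need to be most careful, is the bookkeeping of the Galois action on archimedean places: one must fix a consistent convention for $\sigma(\bL)$ so that the identity $|\lambda-1|_{\sigma(\bL)}=1/|\lambda|_\bL$ really comes out and matches the definition of $T^3_\bL$. Once that convention is pinned down the rest is a direct transcription of the $C_2$ argument, with the single exponent vector $B_k$ replacing the pair $(B_k,C_k)$; in particular the reverse inclusion (right-hand side contained in $\mathcal L^3_\infty(B_k,R_k)$) is immediate from the monotonicity $R_{k+1}<R_k$ and $b_i^{k+1}\leq b_i^k$.
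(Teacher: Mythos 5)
Your proof is correct and follows exactly the route the paper intends: the published proof of Proposition~\ref{PropSplitSolutionsC3} is simply ``similar to Proposition~\ref{PropSplitSolutionsC2},'' and your write-up is a faithful transcription of that argument to the $C_3$ setting, with the identity $\sigma^{-1}(\lambda)-1=-1/\lambda$ (from $\sigma(\lambda)=1/\mu$ and $\lambda+\mu=1$) playing the role of the $C_2$ reflection and Lemma~\ref{Lemmac1} handling the exponent bounds in the first case. You are right that the only delicate point is fixing the convention $|x|_{\sigma(\bL)}=|\sigma^{-1}(x)|_{\bL}$ so that the second clause of $T^3_\bL$ is hit; with that convention pinned down the argument is complete.
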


\begin{proof}
Similar to Proposition \ref{PropSplitSolutionsC2}.
\end{proof}


\paragraph*{$S_3$ case}

As above let $\Gal(L/K)=\langle\sigma,\tau\rangle$ where $\sigma^3=\tau^2=1$ and 
$\tau\sigma\tau=\sigma^2$. As in the cubic case we may choose bases of $\Gl$ and $\Gm$ such that 
$n=m$ and $x_i=y_i$ for all $i=0,\cdots, n$ using Theorem \ref{TheoremS3} and choosing 
$\mu_i=\sigma(\frac{1}{\lambda_i})$. Again, we have only to consider only one bound vector $B_k$ and 
we have $I^\infty=J^\infty$. However, we may now have $\Gl\neq\Gm$ and $\Sl\neq\Sm$. We define 
$\Sminf=S_{J^\infty}$.

Again, the first step is to find an upper bound of $|\ord_\bL(\lambda)|$ for each $\bL\in\Sl$ by 
proving that, 
$$|\mu-1|_\bL <\delta\ll 1$$
has no non--trivial solutions. We use the new upper bounds of $|\ord_\bL(\lambda)|$ to get new bounds 
$B_1$. 

\begin{definition}
Let $B=(b_0,b_1,\cdots,b_n)\in\NN^{n+1}$. Then for $R>1$ we define,
$$\mathcal L^6_\infty(B,R)=\left\lbrace(\lambda,\mu):|x_i|\leq b_i\text{ and }
\begin{array}{c}
|\log|\lambda|_\bL|\leq \log(R)\\
|\log|\mu|_\bL|\leq \log(R)
\end{array}, \forall\bL\in\Slinf\cup\Sminf\right\rbrace.$$
\end{definition}


\begin{lemma}
Every pair of solutions $(\lambda,\mu)$ lies in $\mathcal L^6_\infty(B_1,R_{1,\infty})$ with $\cinf$ 
and $R_{1,\infty}$ as above.
\end{lemma}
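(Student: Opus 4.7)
The plan is to adapt the proof of Lemma \ref{LemmaInitialLC2} from the $C_2$ case, now keeping track of both $\lambda$ and $\mu$ over the enlarged set of places $\Slinf \cup \Sminf$. By Theorem \ref{TheoremS3} we have chosen bases of $\Gl$ and $\Gm$ with $\mu_i = \sigma(1/\lambda_i)$ and $y_i = x_i$ for all $i$, so a single exponent bound vector $B_1$ controls both $\lambda$ and $\mu$ simultaneously. The only new ingredient compared with the $C_2$ case is that $R_{1,\infty}$ must now be interpreted as the maximum of the usual expression over $\Slinf \cup \Sminf$, using $\lambda_i$ or $\mu_i$ in the inner sum as appropriate to the variable being bounded.

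First I would observe that the exponent bounds $|x_i| \leq b_i^1$ come for free from the definition of $B_1$: these were produced by steps (1) and (2) of the sieve together with the refinement obtained at the split primes of $\Sl$ by ruling out non-trivial solutions of $|\mu-1|_\bL < \delta \ll 1$. Since we have arranged $y_i = x_i$, the same bounds simultaneously control the exponents of $\mu$.

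Next, fix $\bL \in \Slinf \cup \Sminf$. By multiplicativity of $|\cdot|_\bL$ and the triangle inequality applied to $\lambda = \prod_{i=1}^{n} \lambda_i^{x_i}$,
$$
\bigl|\log|\lambda|_\bL\bigr| \;\leq\; \sum_{i=1}^{n} |x_i|\cdot\bigl|\log|\lambda_i|_\bL\bigr| \;\leq\; \sum_{i=1}^{n} b_i^1 \bigl|\log|\lambda_i|_\bL\bigr| \;\leq\; \log(R_{1,\infty}),
$$
where the last inequality holds by the definition of $R_{1,\infty}$ as a maximum over the enlarged set. The identical calculation with $\mu_i$ in place of $\lambda_i$ yields $|\log|\mu|_\bL| \leq \log(R_{1,\infty})$. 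Combined with the exponent bounds, this shows $(\lambda,\mu) \in \mathcal L^6_\infty(B_1,R_{1,\infty})$.

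There is essentially no technical obstacle here; all of the real work in the $S_3$ case has already been done upstream, namely the compatible choice of bases of $\Gl$ and $\Gm$ guaranteed by Theorem \ref{TheoremS3} and the correct enlargement of the place set to $\Slinf \cup \Sminf$. Once these are in place the estimate is the same triangle-inequality argument used in the $C_2$ and $C_3$ prototypes, carried out twice (once for $\lambda$, once for $\mu$).
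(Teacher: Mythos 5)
Your proof is correct and takes essentially the same approach as the paper, whose proof of this lemma is literally ``Similar to lemma \ref{LemmaInitialLC2}'' --- i.e.\ the same triangle-inequality estimate, carried out for both $\lambda$ and $\mu$ over $\Slinf\cup\Sminf$ using the compatible bases with $y_i=x_i$. Your remark that $R_{1,\infty}$ must be read as the maximum over the enlarged place set $\Slinf\cup\Sminf$ is a sensible clarification of a detail the paper leaves implicit.
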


\begin{proof}
Similar to lemma \ref{LemmaInitialLC2}.
\end{proof}

\begin{definition} 
Let $B$ be as above. Then for each $\bL\in\Slinf\cup\Sminf$ and $1< R^\prime<R$ we define,
$$T^6_\bL(B,R,R^\prime)=\biggl\{ (\lambda,\mu)\in\mathcal L^6_\infty(B,R):\left.
\begin{array}{c}
|\mu-1|_\bL<\frac{1}{R^\prime}\text{ or }\\
|\lambda-1|_{\bL} <\frac{1}{R^\prime}\text{ or }\\
|\mu-1|_{\sigma^2(\bL)}<\frac{1}{R^\prime}\text{ or }\\
|\lambda-1|_{\sigma(\bL)}<\frac{1}{R^\prime}
\end{array}\right\rbrace.$$
\end{definition}

The following proposition is proved in the same way as Proposition \ref{PropSplitSolutionsC3},

\begin{proposition}\label{PropSplitSolutionsS3}
Let $1< R_{k+1}<R_k$ and $B_k$ be bound vector of the exponents. Then it holds,
$$\mathcal L^6_\infty(B_k,R_k)=\mathcal L^6_\infty(B_{k+1},R_{k+1})\cup \bigcup_{\bL\in\Slinf\cup 
\Sminf}T^6_\bL(B_k,R_k,R_{k+1})$$
where $b_i^{k+1}=\min(b_i^k,\cinf\log (R_{k+1})+\cinf c_{2,\infty})$ for $i\in 
I^\infty$ otherwise $b_i^{k+1}=b_i^k$ and $c_{2,\infty}=\max_{\bL\in\Slinf}\sum_{i\not\in I^\infty} 
b_i^k |\log|\lambda_i|_\bL|$.
\end{proposition}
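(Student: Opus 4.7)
The plan is to mirror the proofs of Propositions \ref{PropSplitSolutionsC2} and \ref{PropSplitSolutionsC3}: given a pair $(\lambda,\mu)\in\mathcal L^6_\infty(B_k,R_k)$, I would show it either already lies in $\mathcal L^6_\infty(B_{k+1},R_{k+1})$ with the tightened bounds, or else that some absolute value of $\lambda$ or $\mu$ is too far from $1$ at a prime in $\Slinf\cup\Sminf$, and translate that excess into one of the four smallness conditions defining $T^6_\bL$ using the relation $\sigma(\lambda)=1/\mu$ together with $\lambda+\mu=1$.

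First, suppose $(\lambda,\mu)\notin\mathcal L^6_\infty(B_k,R_{k+1})$. Then there exists a prime $\bL_0\in\Slinf\cup\Sminf$ at which one of $|\lambda|_{\bL_0},|\mu|_{\bL_0}$ lies outside $[1/R_{k+1},R_{k+1}]$. The four subcases can be handled uniformly, each placing $(\lambda,\mu)$ in $T^6_{\bL_0}$ itself: if $|\lambda|_{\bL_0}<1/R_{k+1}$, then $\mu-1=-\lambda$ gives the first defining condition; if $|\mu|_{\bL_0}<1/R_{k+1}$, then $\lambda-1=-\mu$ gives the second; if $|\lambda|_{\bL_0}>R_{k+1}$, then applying $\sigma(\lambda)=1/\mu$ yields $|\mu|_{\sigma(\bL_0)}=1/|\lambda|_{\bL_0}<1/R_{k+1}$, matching the fourth condition; and if $|\mu|_{\bL_0}>R_{k+1}$, then the derived identity $\sigma^2(\mu)=1/\lambda$ (obtained by iterating $\sigma$ on $\mu=1-\lambda$ and simplifying via $\sigma(\lambda)=1/\mu$) yields $|\lambda|_{\sigma^2(\bL_0)}<1/R_{k+1}$, matching the third condition.

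Second, for $(\lambda,\mu)\in\mathcal L^6_\infty(B_k,R_{k+1})$, I would show the exponent bounds for unit generators automatically tighten to $b_i^{k+1}$. The argument is identical to that of Proposition \ref{PropSplitSolutionsC2}: writing $\lambda=\linf\cdot\prod_{i\notin I^\infty}\lambda_i^{x_i}$, the triangle inequality gives
$$|\log|\linf|_\bL|\le |\log|\lambda|_\bL|+\sum_{i\notin I^\infty}b_i^k\,|\log|\lambda_i|_\bL|\le \log(R_{k+1})+c_{2,\infty}$$
for every $\bL\in\Slinf$. Applying Lemma \ref{Lemmac1} yields $|x_i|\le\cinf\log(R_{k+1})+\cinf c_{2,\infty}$ for $i\in I^\infty$, while for $i\notin I^\infty$ the bound $b_i^k$ is preserved; hence $(\lambda,\mu)\in\mathcal L^6_\infty(B_{k+1},R_{k+1})$.

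The main obstacle is that $\Sl$ and $\Sm$ need not coincide in the $S_3$ setting, so the $C_3$ argument cannot be quoted verbatim; however, the four conditions comprising $T^6_\bL$ are set up precisely so that each of the four excess scenarios at $\bL_0$ is absorbed by $T^6_{\bL_0}$ with $\bL=\bL_0$ unchanged, sidestepping any need to verify $\sigma$--invariance of $\Slinf\cup\Sminf$. The only non--routine step is the derivation of $\sigma^2(\mu)=1/\lambda$ from the given Galois relations, after which the case analysis proceeds mechanically.
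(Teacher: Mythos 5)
Your proof is correct and takes essentially the same approach as the paper, which simply states that the proposition ``is proved in the same way as Proposition \ref{PropSplitSolutionsC3}'' (itself referred back to Proposition \ref{PropSplitSolutionsC2}). You have merely filled in the details the paper leaves implicit, and your key identity $\sigma^2(\mu)=1/\lambda$, together with the matching of the four excess scenarios at $\bL_0$ to the four defining conditions of $T^6_{\bL_0}$, checks out and is exactly what makes the stated decomposition work.
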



We should mention that in this case it may happen that there exists $\bL\in\Slinf\cup\Sminf$ such 
that $\ord_\bL(\lambda)=0$. Then we can not use Lemma 3 in \cite{Smart99} to prove that the 
inequality $|\lambda-1|_\bL<\frac{1}{R_{k+1}}$ does not contain non--trivial solutions. However, for 
such a solution we have $|\log|\lambda|_\bL|=0$ and $|\log|\mu|_\bL|\leq\log(R_{k+1})$ and if it does 
not lie in any set $T^6_\bL(B_k,R_k,R_{k+1})$ then it has to be in $L^6_\infty(B_{k+1},R_{k+1})$.

If we continue this way we reach to a point where we are not able to prove that the sets $T^i_\bL$ do 
not contain non-trivial solutions. If the bounds and the rank of the groups are small we can just do 
a simple loop to find all solutions, using these bounds. However, we may still have a lot of cases to 
check. The idea is to find all the solutions such that $\lambda\equiv 1\mod\bL^e$ for all finite 
primes in $\bL\in\Sm$ and suitable(small) choice of $e$. Now, we know that the remaining solutions 
have smaller valuation for all finite primes, and we can deduce smaller bounds for the non--unit 
generators of $\Gl$ and $\Gm$. Since we have smaller upper bounds for the non--unit generators, we 
can apply all the above results to reduce smaller bounds for the unit generators. 

We can repeat the above procedure up to the point where a simple loop is feasible. In the $C_3$ and 
$S_3$ cases we also use general Hilbert symbols to reduce the number of cases we have to check in the 
final loop. Our solutions $(\lambda,\mu)$ always satisfy $\left(\frac{\lambda,\mu}{\bL} \right)_\ell$ 
for every prime $\bL$ and integer $\ell\geq 1$. For a suitable choice\footnote{We use tame Hilbert 
symbols because there are formulas to evaluate it which are easy implement compared to the general 
case. That means we choose only primes in $\Sl\cup\Sm$ and choose $\ell$ to be the order of the unit 
group of the residue field at $\bL$.} of $\bL$ and $\ell$ we create the matrix $A_\bL=\left(\left( 
\frac{\lambda_i,\mu_j}{\bL}\right)_\ell \right)_{i,j}\in \mathbb{M}_{n+1,n+1}(\ZZ/\ell\ZZ)$ and we 
test whether $xA_\bL x^t=0$. 

In practice, it seems that we have to solve $\lambda\equiv 1\mod\bL^e$ just once. However, there are 
cases where trying to find the solution of $\lambda\equiv 1\mod\bL^e$ it is the most expensive part.

\section{Examples}

For reasons of space we cannot include the full details of the results obtained here; complete lists 
of the join variants in each case may be found at the author's web page 
\textit{http://www2.warwick.ac.uk/fac/sci/maths/people/staff/koutsianas}.

We have compared our results with Cremona's and LMFDB's database\cite{LMFDB}. Finally, we have to say 
that all the computations have been done in the servers at Mathematics Institute of University of 
Warwick and all the code has been written in Sage\cite{Sage}. 

\subsection{$K=\QQ$}

As in \cite{CremonaLingham07} we have computed all the curves for set of primes $S=\{2\},\{2,3\}$ and 
$\{2,p\}$ for $p=3,\cdots, 23$. We present a couple of examples in order to compare our method with 
the Cremona--Linghm method. We also give an example with $S=\{2,3,23\}$ which seems to be beyond the 
range of the Cremona--Lingham method.

\paragraph{Case $S=\{2,3\}$}

We found 83 $j$--invariants as in \cite{CremonaLingham07}. We solved 3 $S$--unit equations for the 
$C_2$ case, one $S$--unit equation for the $C_3$ case and 8 $S$--unit equations for the $S_3$. In all 
these cases we had no difficulties in the reduction and sieve steps. However, it seems that the 
Cremona--Lingham method works faster because we have the large number of $S$--unit equations to solve 
in the $S_3$ case.

\paragraph{Case $S=\{2,17\}$} 

For this set of primes we did not have the difficulties as in \cite{CremonaLingham07} where they had 
to evaluate generators of the Mordell-Weil group with huge denominators of the x--coordinate using 
Heegner points. We only had to solve 3 $S$--unit equations over the quadratic fields 
$\QQ(\sqrt{17})$, $\QQ(\sqrt{2})$ and $\QQ(\sqrt{34})$. We found 29 $j$--invariants.

It would be very useful if we could benefit in Cremona--Lingham method by the fact that we had to 
solve $S$--unit equations only for these 3 quadratic fields. So far, we can not see how we could 
combine the two methods.

\paragraph{Case $S=\{2,3,23\}$} We had to solve $S$--unit equations for 8 quadratic fields, 1 cubic 
extension and 37 $S_3$ extensions. We found 311 isomorphism classes of curves and 5504 curves. We got 
curves with the possible maximal conductor $2^83^523^2=32908032$ which is beyond the range of the 
current modular symbol method \cite{CremonaBook}. Moreover, in Cremona--Lingham method there were 
cases where the Mordell--Weil groups could not be computed. It worthy to mention that the most 
expensive part of the computation was the sieve for the only one $S_3$ case with $\rank(\Gl)=4$. It 
took around 6 times more than all the other computations we had.

\paragraph{More computations} Apart from the sets of primes $S$ we presented above, we have computed 
all the curves for the following set of primes,
\begin{itemize}
\item Curves with at least one rational 2--torsion point for the set of primes $S=\{p\}$ for $p\leq 
200$, $S=\{p,q\}$ for $p\leq 11$ and $q\leq 200$, $S=\{2,3,p\}$ for $p\leq 43$ and $S=\{2,5,p\}$ for 
$p\leq 37$.

\item Curves with cubic 2--division field for $S=\{p\}$ for $p\leq 200$, $S=\{p.q\}$ for $p\leq 5$ 
and $q\leq 200$, $S=\{7,p\}$ for $p\leq 29$ and $S=\{2,3,p\}$ for $p\leq 41$.

\item Curves with $S_3$ 2--division field for $S=\{p\}$ for $p\leq 181$ and $S=\{2,p\}$ for $p\leq 
127$.
\end{itemize}

From the computations we have done and the current implementation, it seems that the method works 
well when the set $S$ does not have very many number of primes and the primes are not too big. For 
example, it seems impossible to us to do computations with primes with 4, 5 or more digits. Even the 
calculation of $S$--unit groups over quadratic fields becomes difficult in this case.

\subsection{Quadratic fields}

\subsubsection{$K=\QQ(\sqrt{-1})$}


\paragraph{Case $S=\{\pK |2\}$} We found 17 $j$--invariants and 64 curves. We had to solve 3 
$S$--unit equations for quadratic extensions of $K$. We get the same number of $j$--invariants and 
curves as in \cite{Laska83}.






\subsubsection{$K=\QQ(\sqrt{5})$}

\paragraph{Case $S=\{\pK|31\}$} For this case we only have partial results. We have computed all the 
curves with at least one rational 2--torsion point. We have found 12 $j$--invariants and 24 curves. 
We solved $S$--unit equations for 7 quadratic extensions of $K$.

\bibliographystyle{alpha}
\bibliography{Elliptic_curves_with_good_reduction_paper}

\begin{thebibliography}{{LMF}15}

\bibitem[CL07]{CremonaLingham07}
J.~E. Cremona and M.~P. Lingham.
\newblock Finding {A}ll {E}lliptic {C}urves with {G}ood {R}eduction {O}utside a
  {G}iven {S}et of {P}rimes.
\newblock {\em Experimental Mathematics}, 16(3):303--312, 2007.

\bibitem[Coh96]{CohenCourse}
H.~Cohen.
\newblock {\em A {C}ourse in {C}omputational {A}lgebraic {N}umber {T}heory}.
\newblock Number 138 in Graduate Texts in Mathematics. Springer-Verlag, New
  York, 1996.

\bibitem[Coh99]{CohenAdvanced}
H.~Cohen.
\newblock {\em Advanced {T}opics in {C}omputational {N}umber {T}heory}.
\newblock Number 193 in Graduate Texts in Mathematics. Springer-Verlag, New
  York, 1999.

\bibitem[Cre97]{CremonaBook}
J.~E. Cremona.
\newblock {\em Algorithms for modular elliptic curves}.
\newblock Cambridge University Press, Cambridge, second edition, 1997.

\bibitem[Dev15]{Sage}
The~Sage Developers.
\newblock {\em {S}age {M}athematics {S}oftware ({V}ersion 6.8)}, 2015.
\newblock {\tt http://www.sagemath.org}.

\bibitem[FP85]{FinckePohst85}
U.~Fincke and M.~Pohst.
\newblock Improved {M}ethods for {C}alculating {V}ectors of {S}hort {L}enght in
  a {L}attice, {I}ncluding a {C}omplexity {A}nalysis.
\newblock {\em Mathematics of Computation}, 44(170):463--471, 1985.

\bibitem[Kid01a]{Kida01-2}
M.~Kida.
\newblock Computing elliptic curves having good reduction everywhere over
  quadratic fields.
\newblock {\em Tokyo J. Math.}, 24(2):545--558, 2001.

\bibitem[Kid01b]{Kida01-1}
M.~Kida.
\newblock Good reduction of elliptic curves over imaginary quadratic fields.
\newblock {\em J. Theor. Nombres Bordeaux}, 13(1):201--209, 2001.

\bibitem[Las83]{Laska83}
Michael Laska.
\newblock {\em Elliptic curves over number fields with prescribed reduction
  type}.
\newblock Aspects of Mathematics, E4. Friedr. Vieweg \& Sohn, Braunschweig;
  distributed by Heyden \& Son, Inc., Philadelphia, PA, 1983.

\bibitem[{LMF}15]{LMFDB}
The {LMFDB Collaboration}.
\newblock The {L}-functions and {M}odular {F}orms {D}atabase.
\newblock \url{http://www.lmfdb.org}, 2015.
\newblock [Online; accessed 12 November 2015].

\bibitem[Rib76]{Ribet76}
Kenneth~A. Ribet.
\newblock A modular construction of unramified {$p$}-extensions of {$Q(\mu
  _{p})$}.
\newblock {\em Invent. Math.}, 34(3):151--162, 1976.

\bibitem[Sil08]{SilvermanArithmetic}
J.~Silverman.
\newblock {\em The {A}rithmetic of {E}lliptic {C}urves}, volume 106 of {\em
  Graduate text in mathematics}.
\newblock Springer-Verlag, New York, 2 edition, 2008.

\bibitem[Sma95]{Smart95}
N.~P. Smart.
\newblock The solution of triangularly connected decomposable form equations.
\newblock {\em Mathematics of Computation}, 64(210):819--840, 1995.

\bibitem[Sma98]{Smartbook}
N.~P. Smart.
\newblock {\em The {A}lgorithmic {R}esolution of {D}iophantine {E}quations}.
\newblock Number~41 in Students {T}exts. London {M}athematical {S}ociety, 1998.

\bibitem[Sma99]{Smart99}
N.~P. Smart.
\newblock Determing the {S}mall {S}ollutions to {S}-unit {E}quations.
\newblock {\em Mathematics of Computation}, 68(228):1687--1699, 1999.

\bibitem[TW89]{TzanakisDeWeger89}
N.~Tzanakis and B.~M. M.~De Weger.
\newblock On the {P}ractical {S}olution of the {T}hue {E}quation.
\newblock {\em Journal of Number Theory}, 31:99--132, 1989.

\bibitem[TW91]{TzanakisDeWeger91}
N.~Tzanakis and B.~M. M.~De Weger.
\newblock Solving a specific {T}hue-{M}ahler equation.
\newblock {\em Mathematics of Computation}, 57(196):799--815, 1991.

\bibitem[TW92]{TzanakisDeWeger92}
N.~Tzanakis and B.~M. M.~De Weger.
\newblock How to explicitly solve a {T}hue-{M}ahler equation.
\newblock {\em Compositio Mathematica}, 84(3):223--288, 1992.

\bibitem[Weg87]{DeWeger87}
B.~M. M.~De Weger.
\newblock Solving {E}xponential {D}iophantine {E}quations {U}sing {L}attice
  {B}asis {R}eduction {A}lgorithm.
\newblock {\em Journal of Number Theory}, 26:325--367, 1987.

\bibitem[Weg88]{DeWegerthesis}
B.~M. M.~De Weger.
\newblock {\em Algorithms {F}or {D}iophantine {E}quations}.
\newblock PhD thesis, University of {L}eiden, 1988.

\bibitem[Wil00]{Wildanger00}
K.~Wildanger.
\newblock Solving unit and index form equations in algebraic number fields.
\newblock {\em Journal of Number Theory}, 82:188--224, 2000.

\end{thebibliography}

\end{document}